\newcommand{\RR}{{\mathbb R}}
\theoremstyle{plain}
\newtheorem{thm}{Theorem}[section]
\newtheorem{lem}[thm]{Lemma}
\newtheorem{prop}[thm]{Proposition}
\newtheorem{cor}[thm]{Corollary}
\newtheorem{dfn}[thm]{Definition}
\newtheorem{remark}[thm]{Remark}
\newcommand\supp{\mathrm{supp}\;}
\newcommand\sgn{\mathrm{sgn}}
\newcommand\distinctsum{\sum_{\substack{j_1,...,j_n \\ \mathrm{distinct}}}}
\newcommand\Ee{\mathbb{E}}
\newcommand\var{\mathrm{var}}
\definecolor {UMblue}  {RGB}{0, 39, 76}
\definecolor {UMmaize} {RGB}{255, 203, 5}
\definecolor {color_b}{RGB}{255,0,0}
\definecolor {color_c}{RGB}{20, 200, 30}
\definecolor {color_a}{RGB}{0,0,255}
\definecolor{lgreen} {RGB}{180,210,100}
\definecolor{dblue}  {RGB}{20,66,129}
\definecolor{ddblue} {RGB}{11,36,69}
\definecolor{lred}   {RGB}{220,0,0}
\definecolor{nred}   {RGB}{224,0,0}
\definecolor{norange}{RGB}{230,120,20}
\definecolor{nyellow}{RGB}{255,221,0}
\definecolor{ngreen} {RGB}{98,158,31}
\definecolor{dgreen} {RGB}{78,138,21}
\definecolor{nblue}  {RGB}{28,130,185}
\definecolor{jblue}  {RGB}{20,50,100}
\begin{document}

\title[Band-limited mimicry]{Band-limited mimicry of point processes by point processes supported on a lattice}
\author{Jeffrey C. Lagarias}
\address{Department of Mathematics, University of Michigan, Ann Arbor, MI 48109-1043}
\email{lagarias@umich.edu}

\author{Brad Rodgers}
\address{Department of Mathematics and Statistics, Queen's University, Kingston, ON, Canada  K7L 3N6}
\email{brad.rodgers@queensu.ca}

\begin{abstract}
We say that one point process on the line $\RR$ mimics another at a bandwidth $B$ if for each $n \ge 1$ the two point processes have $n$-level correlation functions that agree when integrated against  all band-limited test functions on bandwidth $[-B, B]$. This paper asks the question of for what values $a$ and $B$ can a given point process on the real line be mimicked at bandwidth $B$ by a point process supported on the lattice $a\mathbb{Z}$. For Poisson point processes we give a complete answer for allowed parameter ranges $(a,B)$, and for the sine process we give existence and nonexistence regions for parameter ranges. The results for the sine process have an application to the Alternative Hypothesis regarding the scaled spacing of zeros of the Riemann zeta function, given in a companion paper.
\end{abstract}

\keywords{}
\maketitle

\section{Introduction}
\label{sec:intro}

\subsection{Objective}
\label{subsec:outline}

In this paper we ask the following question: how well can the statistics of a point process on the real line $\mathbb{R}$ be mimicked by the statistics of a point process restricted to a lattice $a\mathbb{Z} = \{aj:\, j\in \mathbb{Z}\}$? The statistics we consider are correlation functions, and what we mean by `mimicking' is perfect agreement of the correlation functions of the two processes when integrated against band-limited Schwartz functions of a specified bandwidth, explained further below. We give an analysis of the mimicking problem for two distinct point processes, the Poisson process and the sine process, and uncover some surprising mismatches between the two. 

This problem has its origins in a problem regarding the zeros of the Riemann zeta-function, which we discuss at the end of the introduction and treat more fully in a companion paper \cite{LaRo19}.

\subsection{Background and conventions for point processes}
\label{subsec:background}

We first recall the definition of a point process, and fix notation. A good reference for point processes with conventions similar to ours is Hough et al. \cite{HoKrPeVi09}. Other basic references for point processes include \cite{AnGuZe10, Gr77, Sos00}. 

A point process is a recipe to randomly lay down points in some topological space. In more formal terms: we consider a locally compact separable topological space $\mathfrak{X}$; in fact for us $\mathfrak{X}$ will always be $\mathbb{R}$ or $a\mathbb{Z}$ for some $a>0$, equipped with the Euclidean topology, which is the discrete topology on $a\mathbb{Z}$. A \textbf{point configuration} $u$ in $\mathfrak{X}$ is a sequence of elements ${ u} := (u_j)_{j \in \mathbb{Z}}$ with $u_i \in \mathfrak{X}$ for all $i\in \mathbb{Z}$. For $u$ a configuration, and $V \subset \mathfrak{X}$, we use the notation
$$
\#_V(u) := \#\{i:\, u_i \in V\}
$$
to denote the number of elements of the configuration $u$ inside $V$. We allow repeated values $u_j=u_k $ with $j \ne k$, and count them with multiplicity. We let the configuration space $\mathrm{Conf}(\mathfrak{X})$ be the set of locally finite configurations, that is 
$$
\mathrm{Conf}(\mathfrak{X}) := \{ u:\, \#_K(u) < +\infty \; \textrm{for all compact}\; K\}.
$$
We let $\mathfrak{M}$ be the smallest topology on $\mathrm{Conf}(\mathfrak{X})$ that contain all cylinder sets $C_m^V$, where 
$$
C_m^V := \{u \in \mathrm{Conf}(\mathfrak{X}):\, \#_V(u) = m\}, 
$$ 
where $V$ is any bounded Borel set and $m$ is any non-negative integer.

We let $\mathcal{B}(\mathfrak{M})$ be the Borel $\sigma$-algebra generated by $\mathfrak{M}$. A \textbf{point process} on $\mathfrak{X}$ is a random element $u$ taking values in $(\mathrm{Conf}(\mathfrak{X}), \mathcal{B}(\mathfrak{M}))$. With this definition, the sets
$$
\{u:\, \#_{B_1}(u) = m_1, \, \#_{B_2}(u) = m_2, \, ... , \#_{B_n}(u) = m_n\}
$$
are measurable events, for any finite collection of Borel subsets $B_1, B_2, ..., B_n$ of $\mathfrak{X}$ and for any finite collection of non-negative integers $m_1,...,m_n$. This definition allows points to coincide; they may have a finite multiplicity.  A point process is said to be {\bf simple} if (with probability one)  any configuration has $u_i \ne u_j$ if $i \ne j$. 

In this paper we  specialize to the case that the space is $\mathfrak{X}$ is $\mathbb{R}$ or $a\mathbb{Z}$ for some $a>0$.

For the point processes we will be interested in, we will require an additional condition.\medskip

\noindent{\bf Uniform Local Moments Condition.} 
{\em  
For each $n \geq 1$ there exists a constant $C_n < \infty$ such that
\begin{equation}
\label{eq:uniform_moments_def}
\Ee [ \, (\#_{[L,L+1]}(u))^n ]\leq C_n, \quad \textrm{for all}\; L\in \mathbb{R}.
\end{equation}
Here $C_n$ depends on $n$ but does not depend on $L$.\medskip

We say that a point process satisfying \eqref{eq:uniform_moments_def} has \textbf{uniform local moments}, and refer to it subsequently as a \textbf{u.l.m. point process}.
}

Given a point process on $\mathbb{R}$, for any $n\geq 1$ and any $\phi \in C_c(\mathbb{R}^n)$, the sum
\begin{equation}
\label{eq:corrsum}
\distinctsum \phi(u_{j_1},...,u_{j_n})
\end{equation}
defines a random variable (that is, a measurable mapping from $\mathrm{Conf}(\mathfrak{X})$ to $\mathbb{C}$). In the case that our point process has uniform local moments, the Riesz representation theorem implies for that measure that for all $n\geq 1$ that there exists a unique measure $\rho_n$ on $\mathbb{R}^n$ such that
\begin{equation}
\label{eq:corrmeasures}
\Ee \distinctsum \phi(u_{j_1},...,u_{j_n}) = \int_{\mathbb{R}^n} \phi(x_1,....,x_n) \, d\rho_n(x_1,...,x_n),
\end{equation}
for all $\phi \in C_c(\mathbb{R}^n)$. (See Theorem \ref{thm:moments_implies_correlation} in Appendix \ref{subsec:existence_corrmeasures}.) In the case that $\mathfrak{X} = a\mathbb{Z}$, the measure $\rho_n$ will be supported on $(a\mathbb{Z})^n$. The measure $\rho_n$ is called the \textbf{$n$-level correlation measure of the process $u$}. (The name \textbf{$n$-level joint intensity measure} is used interchangeably in some literature, e.g. \cite[Chap. 3]{HoKrPeVi09}.)

We recall  the well-known fact that if $V$ is any Borel subset of $\mathbb{R}$ (or $a\mathbb{Z}$), we have
\begin{equation}
\label{eq:factorial_counts}
\distinctsum \mathbf{1}_V(u_{j_1})\cdots \mathbf{1}_V(u_{j_n}) = \prod_{i=0}^{n-1} (\#_V(u) - i)
\end{equation}
where $\mathbf{1}_V$ is the indicator function of the set $V$. In consequence 
\begin{equation}
\label{eq:factorial_moments}
\Ee\, \prod_{i=0}^{n-1} (\#_V(u) - i) = \int_{V^n} d\rho_n(x_1,...,x_n).
\end{equation}
From this it follows that a u.l.m.  point process has finite constants $A_n$ such that $\rho_n([L,L+1]^n) \leq A_n$.

The u.l.m. condition on  a point process allows us to extend \eqref{eq:corrsum} to a slightly wider class of functions $\phi$ than $C_c(\mathbb{R}^n)$. Let $\mathcal{S}(\mathbb{R}^n)$ be the Schwartz class of functions on $\mathbb{R}^n$.

\begin{prop}
\label{prop:toSchwartz}
Let $u$ be a u.l.m. point process on $\mathbb{R}$ and let $\rho_n$ be the $n$-level correlation measure of the process $u$ (defined by \eqref{eq:corrmeasures} for all $\phi \in C_c(\mathbb{R}^n)$). Then for all $n\geq 1$ and $\eta \in \mathcal{S}(\mathbb{R}^n)$, the sum
$$
\distinctsum \eta(u_{j_1},...,u_{j_n})
$$
converges almost surely and defines an integrable random variable, with
\begin{equation}
\label{eq:corrmeasuresSchwartz}
\Ee \distinctsum \eta(u_{j_1},...,u_{j_n}) = \int_{\mathbb{R}^n} \eta(x_1,....,x_n) \, d\rho_n(x_1,...,x_n).
\end{equation}
\end{prop}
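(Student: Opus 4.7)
The plan is to reduce the Schwartz extension to the already-established identity \eqref{eq:corrmeasures} on $C_c(\mathbb{R}^n)$, using a cutoff-and-limit argument. The whole proof rests on a single quantitative bound: that the correlation measure $\rho_n$ has uniformly bounded mass on unit cubes in $\mathbb{R}^n$, namely
\[
 \rho_n\bigl([L_1,L_1+1]\times\cdots\times[L_n,L_n+1]\bigr) \leq C_n
\]
for all $(L_1,\ldots,L_n)\in\mathbb{R}^n$, with the constant $C_n$ from the u.l.m. hypothesis. To establish this, I would start from \eqref{eq:factorial_moments} applied with indicator $\mathbf{1}_{[L_i,L_i+1]}(u_{j_i})$ instead of $\mathbf{1}_V$, which gives
\[
 \rho_n\bigl(\textstyle\prod_i [L_i,L_i+1]\bigr) \le \Ee\, \textstyle\prod_{i=1}^n \#_{[L_i,L_i+1]}(u),
\]
and then bound the right-hand side by Hölder's inequality and \eqref{eq:uniform_moments_def} to get $\prod_i \Ee[(\#_{[L_i,L_i+1]}(u))^n]^{1/n} \leq C_n$.

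Given this cube bound, the integrability $\int_{\mathbb{R}^n} |\eta|\,d\rho_n < \infty$ for $\eta\in\mathcal{S}(\mathbb{R}^n)$ follows by decomposing $\mathbb{R}^n = \bigsqcup_{k\in\mathbb{Z}^n} Q_k$ with $Q_k=\prod_i[k_i,k_i+1]$ and using the Schwartz decay $\sup_{Q_k}|\eta|\leq C_N(1+|k|)^{-N}$ for $N>n$. Next, pick a smooth cutoff $\chi_R$ with $\chi_R=1$ on $[-R,R]^n$, $\chi_R=0$ off $[-2R,2R]^n$, and $0\le\chi_R\le 1$, and set $\phi_R := \eta\,\chi_R \in C_c(\mathbb{R}^n)$ and $\psi_R:=|\eta|\,\chi_R\in C_c(\mathbb{R}^n)$. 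Since $\psi_R\nearrow |\eta|$, Tonelli (or monotone convergence) applied together with \eqref{eq:corrmeasures} for $\psi_R$ yields
\[
 \Ee\distinctsum|\eta|(u_{j_1},\ldots,u_{j_n}) = \lim_{R\to\infty}\Ee\distinctsum \psi_R(u_{j_1},\ldots,u_{j_n}) = \lim_{R\to\infty} \int \psi_R\, d\rho_n = \int |\eta|\, d\rho_n < \infty.
\]
Hence the distinct sum $\distinctsum \eta(u_{j_1},\ldots,u_{j_n})$ converges absolutely almost surely, defines an integrable random variable, and is dominated in absolute value by the finite random variable $\distinctsum |\eta|$.

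Finally, I would obtain \eqref{eq:corrmeasuresSchwartz} by sending $R\to\infty$ in the identity $\Ee\distinctsum \phi_R = \int \phi_R\, d\rho_n$ provided by \eqref{eq:corrmeasures}: on the left we use dominated convergence for the probability measure, with dominating function $\distinctsum|\eta|(u_{j_1},\ldots,u_{j_n})$, and on the right dominated convergence against $\rho_n$, with dominating function $|\eta|$. Both limits give the desired formula. The only delicate point is the cube bound on $\rho_n$ derived from the u.l.m. condition via Hölder; once that is in hand every remaining step is a standard application of monotone or dominated convergence, so I do not anticipate a substantive obstacle.
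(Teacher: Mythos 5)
Your proposal is correct and follows essentially the same route as the paper's own proof (Theorem \ref{thm:correlations_to_rapidlydecaying} in Appendix \ref{subsec:compactlysupported_to_schwartz}): a uniform bound on $\rho_n$ over unit cubes obtained from the u.l.m.\ condition via H\"older, followed by a $C_c$ cutoff and monotone/dominated convergence on both sides of \eqref{eq:corrmeasures}. The only cosmetic difference is that the paper works with the explicit dominating function $Q(x)=\prod_i(1+x_i^2)^{-1}$ and thereby proves the statement for the slightly larger class of continuous functions with $O(Q)$ decay, whereas you dominate by $|\eta|$ itself; both arguments are sound.
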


Proposition \ref{prop:toSchwartz} is proved via a simple limiting argument combined with the dominated convergence theorem. Theorem \ref{thm:correlations_to_rapidlydecaying} in Appendix \ref{subsec:compactlysupported_to_schwartz} gives a slightly more general result with a full proof.

\begin{remark}
\label{rem:12}
It is possible for two distinct point processes share the same correlation functions for all $n \ge 1$. For instance, if $X$ and $Y$ are random variables taking values in the natural numbers which have the same moments but different distributions -- see \cite[Sec. 11.7]{St14} for a construction -- let $u$ be the point process consisting of $X$ points at the origin (and no other points) and $v$ be the point process consisting of $Y$ points at the origin (and no other points). Then $u$ and $v$ will have the same correlation measures but different distributions.

This phenomenon is not the usual situation: if  a point process has uniform local moments whose constants $C_n$ in \eqref{eq:uniform_moments_def} do not grow too quickly with $n$, then any other point process that has the same correlation measures $\rho_n$ for $n \ge 1$ must be identical in distribution. See \cite[Theorem 2] {Le73} and \cite[Remark 1.2.4]{HoKrPeVi09}. One may make a comparison between this fact and the classical moment problem for random variables, see Lenard \cite[p. 242]{Le75}, and for the moment problem \cite{ShTa43},  \cite{Ak65}, \cite{Si98}.
\end{remark}

\subsection{Statement of the problem}
\label{subsec:theproblem}

Throughout the paper we use the convention that the Fourier transform of $\eta(x)$  on $\mathbb{R}^n$ is given by 
$$
\hat{\eta}(\xi) = \int_{\mathbb{R}^n}  \eta(x) e(-x\cdot \xi)\, dx,
$$ 
where $e(y) = e^{2\pi i y}$ and $x \cdot \xi = x_1 \xi_1 + \cdots + x_n\xi_n$. 

We make the following definition.

\begin{dfn}
\label{dfn:bandlimited_mimicry}
Let $u$ and $v$ be u.l.m. point processes in $\mathbb{R}$, and let $B > 0$. Suppose that for each $n\geq 1$ and all $\eta \in \mathcal{S}(\mathbb{R}^n)$ whose Fourier transform  $\hat{\eta}$ is supported in $[-B,B]^n$, we have
\begin{equation}
\label{eq:bandlimited_equality}
\Ee \distinctsum \eta(u_{j_1},...,u_{j_n}) = \Ee \distinctsum \eta(v_{j_1},...,v_{j_n}).
\end{equation}
Then we say that \textbf{$v$ mimics $u$ at the bandwidth $[-B, B]$} (resp. \textbf{$v$ mimics $u$ at the bandwidth $B$}).
\end{dfn}

The mimicry relation is an equivalence relation: it is reflexive, symmetric and transitive. The symmetry  property is if $v$ mimics $u$ at bandwidth $B$, then $u$ mimics $v$ at bandwidth $B$.

We have been motivated to consider this definition by an application to number theory described in Subsection \ref{subsec:alterhyp}.

A point process is said to be {\bf supported} on $a\mathbb{Z}$ if all configurations lie in $a\mathbb{Z}$. We ask the following question in general:

\begin{quote}
{\bf Band-limited Mimicry Problem.} {\em  For a given u.l.m. point process $u$ in $\mathbb{R}$, for what values $a$ and $B$ does there exist a  u.l.m. point process $u^\ast$ supported on the lattice $a\mathbb{Z}$ such that $u^\ast$ mimics $u$ at the bandwidth $B$?}
\end{quote}

In this problem we do not require either the point processes we consider $u$ or $u^{\ast}$ to be simple.\\

In the band-limited mimicry problem for a process $u$ we are given partial information about the $n$-level correlation measures for a putative point process $u^\ast$ supported on $a\mathbb{Z}$. A major difficulty in resolving the problem is that not all collections of measures $\rho^\ast_n$ are realizable as the correlation measures of some point process. Determining which collections of measures are in fact the correlation measures of some point process is referred to as the \emph{realizability of point processes}. Abstract criteria for the realizability of a point process were  given by Lenard \cite[Theorem 4.1]{Le75}  in terms of correlation measures. These criteria are  hard to apply in practice. Lenard also  specified a large set of inequalities that  correlation functions must satisfy, which provide a possible mechanism to prove non-realizability, e.g. \cite[Propositions 3.4--3.8]{Le75}. The realizability problem  has  more recently been the subject of considerable work \cite{KuLeSp07,KuLeSp11,CaInKu16}.

It is not  clear  for which point processes band-limited mimicry is possible at all (namely,  for some $a, B >0$). This paper exhibits some processes where band-limited mimicry is possible and establishes limits on allowable mimicry parameters $(a, B)$.

\subsection{Sampling and interpolation}
There is a certain relation between the band-limited mimicry problem and the classical problems of sampling and interpolating a signal. Below by saying that a function is band-limited on $\mathbb{R}^n$, we mean that function has compactly supported Fourier transform.

Indeed, the sampling theorem (see Grafakos \cite[Thm. 5.6.9]{Gr14}) tells us that a band-limited function $\eta \in \mathcal{S}(\mathbb{R}^n)$ which satisfies $\supp \hat{\eta} \subset [-1/2a, 1/2a]^n$ can be reconstructed (by interpolation) from its sample values on the  lattice  $a \mathbb{Z}$, by the Whittaker-Shannon interpolation formula 
\begin{equation}\label{eq:interpolation}
\eta(x) = \sum_{k \in (a\mathbb{Z})^n} \eta(k) \prod_{i=1}^n S\Big(\frac{x_i-k_i}{a}\Big),
\end{equation}
where $S(x)$ is a sinc-function, defined by
\begin{equation}
\label{eq:sinc_def}
S(x) = \begin{cases} 
\frac{\sin\, \pi x}{\pi x} & x\neq 0 \\
\,\,\,1 & x=0.
\end{cases}
\end{equation}

Therefore, given a Schwartz function $\eta \in \mathcal{S}(\mathbb{R}^n)$ having  $\supp \hat{\eta} \subset [1/2a, 1/2a]^n$, for a u.l.m. point process's $n$-point correlation measures $\rho_n$, we have 
\begin{equation}
\label{eq:sampling_correlations0}
\int_{\mathbb{R}^n} \eta(x) d\rho_n(x) =  \int_{\mathbb{R}^n} \sum_{k \in (a\mathbb{Z})^n} \eta(k) \prod_{i=1}^n S\Big(\frac{x_i-k_i}{a}\Big) \, d\rho_n(x).
\end{equation}
Under mild hypotheses on $\rho_n$ we can interchange the  sum and integral on the right side to obtain
\begin{equation}
\label{eq:sampling_correlations1}
\int_{\mathbb{R}^n} \eta(x) d\rho_n(x) = \sum_{k \in (a\mathbb{Z})^n} \eta(k) \int_{\mathbb{R}^n} \prod_{i=1}^n S\Big(\frac{x_i-k_i}{a}\Big) \, d\rho_n(x).
\end{equation}

That is, we have 
\begin{align}
\label{eq:sampling_correlations}
\int_{\mathbb{R}^n} \eta(x) d\rho_n(x)
= \int_{\mathbb{R}^n} \eta(x) d\rho_n'(x),
\end{align}
where $\rho_n'(x)$ is the atomic measure on $\mathbb{R}^n$ supported on the lattice $(a\mathbb{Z})^n$ with

\begin{equation}
\label{eq:interpolate_bandlimited}
\rho_n'(\{k\}) = \int_{\mathbb{R}^n} \prod_{i=1}^n S\Big(\frac{x_i-k_i}{a}\Big) \, d\rho_n(x), 
\end{equation}
for all  $k=(k_1, k_2, ..., k_n)  \in (a\mathbb{Z})^n$. (There exist measures $\rho_n$ such that the integral in \eqref{eq:interpolate_bandlimited} will not converge, but for all $\rho_n$ which we consider this integral will indeed converge.)

Nonetheless  the existence of a measure $\rho_n'$ supported on $(a\mathbb{Z})^n$ satisfying \eqref{eq:sampling_correlations} is only a necessary condition that there exist   a point process having  such correlation measures. As we will see,  the measures defined by \eqref{eq:sampling_correlations} are sometimes realized as correlation measures of a point process, but sometimes they are not. The bandwidth $\frac{1}{2a}$ for the lattice $a\mathbb{Z}$ nonetheless retains a certain importance, and  we use the convention that the bandwidth $B = \frac{1}{2a}$ is called \textbf{the Nyquist bandwidth} (for mimicry on $a \mathbb{Z}$), following a naming convention in sampling theory. (More often $\frac{1}{a} = 2B$ is termed the {\bf Nyquist rate} (measured in samples per second) for sampling band-limited  functions whose Fourier transform has maximum frequency $B$ on a lattice with spacing $a\mathbb{Z}$.)

We note that there are general mathematical results asserting that for (stable) reconstruction of an arbitrary band-limited signal on  $\mathbb{R}^n$ with frequencies confined  to  a finite set of intervals having  measure $2B$ using a sampling scheme on $a\mathbb{Z}^n$, one must have $B \le \frac{1}{2a}$, see Landau \cite[Theorem 1]{Lan67a}, \cite{Lan67b}, who noted that the special case of an interval $[-B, B]$ was originally due to A. Beurling.

\subsection{Point Processes studied}

In this paper we will treat in detail the mimicry tradeoff between $a$ and $B$ for two particular point processes for which mimicry occurs: the Poisson process and the sine-process. 

\subsubsection{Poisson point process}
The Poisson process is in fact a family of point processes indexed by a parameter $\lambda>0$ called the {\em intensity}. \textbf{The Poisson process of intensity $\lambda$} may be characterized as follows \cite[Ex. 2.5]{Jo02}: it is the unique point process $w$ with correlation measures defined by 
\begin{equation}
\label{eq:poisson_def}
\Ee \distinctsum \phi(w_{j_1},...,w_{j_n}) = \int_{\mathbb{R}^n} \phi(x_1,...,x_n) \cdot \lambda^n\, dx_1\cdots dx_n,
\end{equation}
for all $n\geq 1$ and for all $\phi \in C_c(\mathbb{R}^n)$. Thus its $n$-point correlation measure is $d\rho_n(x) = \lambda^n d^n x$. From this fact and \eqref{eq:factorial_moments} it is easy to see for any $\lambda$ that the Poisson point process of intensity $\lambda$ has uniform local moments.

\subsubsection{Sine process}
The sine process is a  name often used  for the determinantal point process associated to  the  sine kernel  $K(x,y) = \frac{\sin \pi(x-y)}{\pi(x-y)}$ for $x \ne y$, and $K(x,x) \equiv 1$, cf. \cite{BoDIK18}). \textbf{The sine process} may be characterized as follows \cite[Ch. 4]{HoKrPeVi09}: it is the unique point process $z$ with correlation measures $d\rho_n(x) = \det_{n\times n}[S(x_i-x_j)]\, d^n x$:
\begin{equation}
\label{eq:sinekernel_def}
\Ee \distinctsum \phi(z_{j_1},...,z_{j_n}) = \int_{\mathbb{R}^n} \phi(x_1,...,x_n) \cdot \det_{n\times n}\Big[S(x_i-x_j)\Big]\, dx_1\cdots dx_n,
\end{equation}
for all $n\geq 1$ and all $\phi \in C_c(\mathbb{R}^n)$. Here $\det_{n\times n}[\cdot]$ denotes an $n\times n$ determinant, and $S(x)$ is the sinc function given in \eqref{eq:sinc_def}. Furthermore, by convention, the right hand side of \eqref{eq:sinekernel_def} for $n=1$ has the meaning $\int_\mathbb{R} \phi(x_1)\, dx_1$. From this correlation measure and \eqref{eq:factorial_moments} it is easy to see that the  sine process also has uniform local moments.

\subsection{Main results: general processes}
\label{subsec:mainresults}

We prove two general results about mimicry. The first  result is a uniqueness result for the correlation functions of a  mimicking process strictly above  the Nyquist bandwidth.

\begin{thm}
\label{thm:nyquist-dichotomy-1}
For any u.l.m. point process $u$ on $\mathbb{R}$, if there exists a  point process $u'$ supported on $a\mathbb{Z}$ that mimics $u$ at the bandwidth 
$[-B, B]$  with $B > \frac{1}{2a}$ then its $n$-point correlation measures, supported on $(a \mathbb{Z})^n$, are uniquely determined for all $n \ge 1$.
\end{thm}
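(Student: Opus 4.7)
The plan is to exhibit, for each $n\ge 1$ and each lattice point $k\in(a\mathbb{Z})^n$, an explicit Schwartz test function $\eta_k$ with $\supp\widehat{\eta_k}\subset[-B,B]^n$ which acts as a ``delta'' on the lattice: $\eta_k(k)=1$ and $\eta_k(k')=0$ for $k'\in(a\mathbb{Z})^n\setminus\{k\}$. Once this is done, the mimicry hypothesis combined with Proposition \ref{prop:toSchwartz} yields
\begin{equation*}
\rho_n'(\{k\}) \;=\; \int_{\mathbb{R}^n}\eta_k(x)\,d\rho_n'(x) \;=\; \int_{\mathbb{R}^n}\eta_k(x)\,d\rho_n(x),
\end{equation*}
so $\rho_n'$ is determined entirely by $\rho_n$ and the (explicit, $u$-independent) functions $\eta_k$.

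To justify the first equality above, note that since $u'$ is supported on $a\mathbb{Z}$, the distinct sum defining $\rho_n'$ in \eqref{eq:corrmeasures} is concentrated on $(a\mathbb{Z})^n$, so $\rho_n'$ is atomic and supported on this countable set. The u.l.m. hypothesis (via \eqref{eq:factorial_moments}) bounds $\rho_n'([L,L+1]^n)\le A_n$ uniformly in $L$, so the atomic masses are locally bounded; combined with Schwartz decay of $\eta_k$, the sum $\sum_{k'\in(a\mathbb{Z})^n}\eta_k(k')\rho_n'(\{k'\})$ converges absolutely to $\rho_n'(\{k\})$.

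The construction of $\eta_k$ is the one place the strict inequality $B>\tfrac{1}{2a}$ enters. Set $\epsilon:=B-\tfrac{1}{2a}>0$ and, by Paley--Wiener, choose $g\in\mathcal{S}(\mathbb{R})$ with $g(0)=1$ and $\supp\hat g\subset[-\epsilon,\epsilon]$; e.g.\ take $g$ to be the inverse Fourier transform of a $C^\infty_c$ bump on $[-\epsilon,\epsilon]$ of total integral $1$. Define
\begin{equation*}
\eta_0(x) := S(x/a)\,g(x), \qquad \eta_k(x_1,\dots,x_n):=\prod_{i=1}^n \eta_0(x_i-k_i).
\end{equation*}
Then $\eta_0$ is Schwartz because $S(\cdot/a)$ and all of its derivatives are uniformly bounded (the Fourier transforms of the bounded compactly supported functions $(2\pi i\xi)^m \mathbf{1}_{[-1/2a,1/2a]}(\xi)$), while $g$ is Schwartz. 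Since $\widehat{S(\cdot/a)}=a\,\mathbf{1}_{[-1/2a,1/2a]}$, the convolution formula gives $\supp\widehat{\eta_0}\subset[-\tfrac{1}{2a},\tfrac{1}{2a}]+[-\epsilon,\epsilon]=[-B,B]$. Moreover $\eta_0(0)=S(0)g(0)=1$ and $\eta_0(ak)=S(k)g(ak)=0$ for $k\in\mathbb{Z}\setminus\{0\}$, since the sinc function vanishes at nonzero integers. The corresponding properties for $\eta_k$ in dimension $n$ follow by taking tensor products.

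The only nontrivial obstacle is the Schwartz construction just given, and this is exactly where the hypothesis $B>\tfrac{1}{2a}$ is essential: at the Nyquist threshold $B=\tfrac{1}{2a}$ the only bandlimited candidate interpolating $\delta_{k,\cdot}$ on $a\mathbb{Z}$ is the pure sinc $\prod_i S((x_i-k_i)/a)$, which is not Schwartz and does not admit the preceding limiting argument, so at Nyquist uniqueness may genuinely fail.
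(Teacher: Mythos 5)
Your proof is correct and follows essentially the same strategy as the paper: build a Schwartz function, band-limited to $[-B,B]^n$, that interpolates the Kronecker delta on $(a\mathbb{Z})^n$, then use the support of $u'$ and the mimicry hypothesis to read off $\rho_n'(\{k\})$ from $\rho_n$. The only difference is cosmetic — you take $S(x/a)g(x)$ with $g$ a band-limited mollifier of bandwidth $\epsilon=B-\tfrac{1}{2a}$, whereas the paper uses $\hat\beta_\varepsilon\bigl(\tfrac{x-k}{a}\bigr)$ for a smooth partition-of-unity bump $\beta_\varepsilon$ approximating $\mathbf{1}_{[-1/2,1/2]}$; both yield the same interpolation and support properties.
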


The uniqueness assertion of Theorem \ref{thm:nyquist-dichotomy-1} need not hold   for $B\leq\tfrac{1}{2a}$. In fact, for all $a > 0$ and $B=\tfrac{1}{2a}$, 
there exist two distinct point processes supported on $a\mathbb{Z}$, with different correlation measures for all $n\geq 1$, which mimic the Poisson process (of any intensity $\lambda$). See Proposition \ref{prop:therearetwo}. 

We prove Theorem \ref{thm:nyquist-dichotomy-1} in Section \ref{sec:2}, where we give a reconstruction formula for these correlation measures in Theorem \ref{thm:nyquist_mimickry}. 
Note that $n$-point correlation functions do not always uniquely determine a point process, but they do so provided a suitable bound on the growth of the local moments of the process holds.

The second result gives an upper bound for the mimicry tradeoff for translation invariant u.l.m. point processes. We  call a point process  {\em $\mathbb{R}$-translation invariant (in the correlation sense)} if for all $n \ge 1$ its $n$-point correlation measures satisfy for each $(x_1, x_2, ..., x_n) \in \mathbb{R}^n$, 
\begin{equation}
\label{eq:translation_invariant} 
\rho_n(x_1 +t, x_2 +t , \cdots , x_n + t) = \rho_n(x_1, x_2 , \cdots , x_n) \quad \mbox{for all} \quad t \in \mathbb{R}.
\end{equation}
The usual notion of translation-invariance for a point process  requires that  its probability law be  invariant in distribution  under translations, compare \cite[Sect. 4.2.6]{AnGuZe10}. This notion implies translation-invariance in the correlation sense. For u.l.m. point processes with a suitable growth bound on their local moments, so that the correlation functions uniquely determine the law of the process, the two definitions are equivalent.

We have a similar notion of  {$(a \mathbb{Z)}$-translation invariance} (in the correlation sense), for point processes supported on the lattice $a\mathbb{Z}$
restricting $(x_1, x_2, \cdots, x_n) \in (a \mathbb{Z})^n$ and $t \in a\mathbb{Z}$ above. Furthermore we say a point process $u$ is \emph{trivial} if for all Borel subsets $B$, $\#_B(u) = 0$ almost surely. A point process is said to be \emph{non-trivial} otherwise.


\begin{thm}
\label{thm:upper_bound_nyquist}
Let  $u$ be  a non-trivial u.l.m. point process on $\mathbb{R}$ that is $\mathbb{R}$-translation-invariant in the correlation sense. If a point process $u'$ supported on $a\mathbb{Z}$  mimics $u$ at the bandwidth $[-B, B]$, then necessarily $B \le \frac{1}{a}$. That is, for any lattice $a \mathbb{Z}$ the process $u$ cannot be mimicked on $a\mathbb{Z}$ above twice its Nyquist bandwidth.
\end{thm}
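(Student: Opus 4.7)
The plan is to extract a contradiction from the mimicry hypothesis applied only to the $1$-point correlation measures, exploiting Fourier aliasing at the frequency $\xi = 1/a$.

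First I would use the $\mathbb{R}$-translation invariance to conclude that the $1$-point correlation measure of $u$ is $d\rho_1 = \lambda\,dx$ for some $\lambda \in [0,\infty)$: $\rho_1$ is a translation-invariant Radon measure on $\mathbb{R}$, hence a constant multiple of Lebesgue measure, and the u.l.m.\ condition together with \eqref{eq:factorial_moments} at $n=1$ makes $\lambda$ finite. Non-triviality of $u$ and \eqref{eq:factorial_moments} at $n=1$ then force $\lambda > 0$, since $\lambda = 0$ would give $\Ee\,\#_V(u) = 0$ and hence $\#_V(u) = 0$ almost surely for every Borel $V$. On the other side, $u'$ is supported on $a\mathbb{Z}$, so its $1$-point correlation measure has the form $d\rho_1' = \sum_{k\in\mathbb{Z}}\mu_k\,\delta_{ak}$ with $\mu_k \ge 0$ uniformly bounded (by the constant $C_1$ from the u.l.m.\ condition on $u'$, applied to a unit interval containing $ak$). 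Proposition \ref{prop:toSchwartz} combined with the mimicry definition \eqref{eq:bandlimited_equality} at $n=1$ then yields the identity
\[
\lambda\,\hat{\psi}(0) \;=\; \sum_{k\in\mathbb{Z}} \mu_k\,\psi(ak)
\]
for every Schwartz function $\psi$ on $\mathbb{R}$ with $\supp\hat{\psi}\subset[-B,B]$.

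Assuming for contradiction that $B > 1/a$, I would fix a real number $\epsilon$ with $0 < \epsilon < \min(1/a,\,B - 1/a)$ and choose a real Schwartz function $\chi$ with $\supp\hat{\chi}\subset(-\epsilon,\epsilon)$ and $\hat{\chi}(0)\ne 0$ (e.g.\ a smooth bump at the origin on the Fourier side). Consider the modulated test function $\psi(x) := e^{2\pi i x/a}\chi(x)$; then $\hat{\psi}(\xi) = \hat{\chi}(\xi - 1/a)$ is supported in $(1/a - \epsilon,\,1/a + \epsilon) \subset [-B,B]$, so the displayed identity applies to $\psi$. On the one hand $\hat{\psi}(0) = \hat{\chi}(-1/a) = 0$, because $-1/a \notin \supp\hat{\chi}$. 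On the other hand $\psi(ak) = e^{2\pi i k}\chi(ak) = \chi(ak)$ for every $k\in\mathbb{Z}$. Combining these gives $\sum_k \mu_k\,\chi(ak) = 0$. Applying the identity a second time to $\chi$ itself (whose Fourier support lies in $(-\epsilon,\epsilon)\subset[-B,B]$), I obtain $\lambda\,\hat{\chi}(0) = \sum_k \mu_k\,\chi(ak) = 0$. Since $\hat{\chi}(0)\ne 0$ this forces $\lambda = 0$, contradicting non-triviality.

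The heart of the argument is the aliasing identity $\psi(ak) = \chi(ak)$, which expresses the familiar fact that the pure frequencies $0$ and $1/a$ are indistinguishable to any measure supported on the lattice $a\mathbb{Z}$. I do not expect any genuine obstacle beyond bookkeeping: the only routine checks are that Proposition \ref{prop:toSchwartz} extends to complex-valued Schwartz test functions (by splitting into real and imaginary parts) and that the sum $\sum_k \mu_k\,\chi(ak)$ converges absolutely, which is immediate from the uniform boundedness of the $\mu_k$ and the rapid decay of $\chi$.
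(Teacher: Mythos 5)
Your argument is correct, and it takes a genuinely different route from the paper's. You work entirely with the $1$-level correlation measures: translation invariance and non-triviality give $d\rho_1 = \lambda\,dx$ with $\lambda>0$, support on the lattice gives $d\rho_1' = \sum_k \mu_k\,\delta_{ak}$ with bounded nonnegative masses, and the modulated test function $\psi(x)=e^{2\pi i x/a}\chi(x)$ exploits the aliasing of the frequencies $0$ and $1/a$ on $a\mathbb{Z}$ to force $\lambda\,\hat{\chi}(0)=0$, a contradiction. All the side checks you flag are indeed routine: $u'$ is u.l.m.\ by Definition \ref{dfn:bandlimited_mimicry}, so $\mu_k \le C_1'$ uniformly; the complex test function is admissible either directly from the definition or by splitting $\psi$ into $\cos(2\pi x/a)\chi(x)$ and $\sin(2\pi x/a)\chi(x)$, each band-limited in $[-B,B]$; and $\lambda=0$ would make $u$ trivial via $\#_V(u)=0$ a.s.\ for bounded $V$ and a countable union. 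The paper instead first proves a general reconstruction/uniqueness theorem above the Nyquist bandwidth (Theorem \ref{thm:nyquist_mimickry}), deduces that a mimicking $u'$ with $B>\tfrac{1}{2a}$ inherits $a\mathbb{Z}$-translation invariance of its correlation measures (Corollary \ref{thm:nyquist_translation_inv_bound}), and then observes that for $B>\tfrac{1}{a}$ the same corollary applied to the finer lattice $\tfrac{a}{2}\mathbb{Z}$ would force $\Ee\,\#_{\{a/2\}}(u')>0$, impossible for a process supported on $a\mathbb{Z}$. The underlying phenomenon is the same aliasing at frequency $1/a$, but your proof is shorter, self-contained, and needs only $n=1$ correlations, whereas the paper's detour through the reconstruction formula yields intermediate results (unique determination of all $\rho_n'$ above Nyquist, inheritance of translation invariance) that are used elsewhere in the paper.
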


The bound $\frac{1}{a}$ of Theorem \ref{thm:upper_bound_nyquist} is tight. We show in Theorem \ref{thm:fullrange_poisson} that it is attained for  the Poisson process.

Theorem \ref{thm:upper_bound_nyquist} is derived at the end of  Section \ref{sec:2} using a result that if $u$ is a translation-invariant point process (in the correlation sense) that can be mimicked by a process $u'$ on $a\mathbb{Z}$ at a bandwidth $ B >\frac{1}{2a}$ above the Nyquist bandwidth, then $u'$ is necessarily  $a \mathbb{Z}$-translation invariant (in the correlation sense). We obtain a contradiction from this property if $B> \frac{1}{a}$.

\subsection{Main results:  Poisson process and sine process}

We prove specific results for the Poisson process and the sine process. For each lattice spacing $a$, one may ask about the full range of bandwidths $B$ for which the point process can be mimicked. 

For the Poisson process we have a complete characterization: mimicry is possible for $B$ up to and including twice the Nyquist bandwidth.

\begin{thm}[Poisson process mimicry - general bandwidth]
\label{thm:fullrange_poisson}
Let $\lambda > 0$ be arbitrary. We have,
\begin{enumerate}
\item For all $a > 0$, if $ B \leq \frac{1}{a}$, then the Poisson process with intensity $\lambda$ can be mimicked at bandwidth $[-B, B]$ by a u.l.m. point process supported on $a\mathbb{Z}$.
\item For all $a > 0$, if $ B > \frac{1}{a}$, then the Poisson process with intensity $\lambda$ cannot be mimicked at bandwidth $[-B, B]$ by a u.l.m. point process supported on $a\mathbb{Z}$.
\end{enumerate}
\end{thm}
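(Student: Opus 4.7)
Part (ii) is an immediate consequence of Theorem \ref{thm:upper_bound_nyquist}: the Poisson process of intensity $\lambda > 0$ is non-trivial, and its correlation measures $d\rho_n = \lambda^n\,d^n x$ are plainly $\mathbb{R}$-translation invariant in the correlation sense, so no point process supported on $a\mathbb{Z}$ can mimic it at any bandwidth $B > 1/a$.

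For part (i), my plan is to exhibit an explicit candidate $u^{\ast}$ and verify the mimicry identity directly. I would take $u^{\ast}$ to be the ``Poissonized lattice process'' obtained by independently placing at each lattice point $ak$, $k \in \mathbb{Z}$, a Poisson($\lambda a$)-distributed number $N_k$ of coincident particles (so $u^{\ast}$ is supported on $a\mathbb{Z}$, with multiplicities). Because $\#_{[L,L+1]}(u^{\ast})$ is a sum of at most $\lceil 1/a\rceil + 1$ i.i.d.\ Poisson($\lambda a$) variables, $u^{\ast}$ is a legitimate u.l.m.\ point process on $a\mathbb{Z}$. Using \eqref{eq:factorial_counts}, the independence of distinct $N_k$, and the classical identity $\Ee[N(N-1)\cdots(N-n+1)] = \mu^n$ for $N \sim \mathrm{Poisson}(\mu)$, a routine site-by-site accounting shows that the $n$-point correlation measure $\rho_n^{\ast}$ of $u^{\ast}$ is the atomic measure on $(a\mathbb{Z})^n$ assigning mass $(\lambda a)^n$ to every point $(ak_1,\ldots,ak_n)$.

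With this correlation measure in hand, the mimicry identity reduces to Poisson summation. For $\eta \in \mathcal{S}(\mathbb{R}^n)$ with $\supp \hat{\eta} \subset [-B,B]^n$ and $B \leq 1/a$, Proposition \ref{prop:toSchwartz} together with the previous step yields
\begin{equation*}
\Ee \distinctsum \eta(u^{\ast}_{j_1},\ldots,u^{\ast}_{j_n}) = (\lambda a)^n \sum_{k \in \mathbb{Z}^n} \eta(ak_1,\ldots,ak_n) = \lambda^n \sum_{\ell \in \mathbb{Z}^n} \hat{\eta}(\ell/a),
\end{equation*}
which equals $\lambda^n \hat{\eta}(0) = \lambda^n \int_{\mathbb{R}^n}\eta(x)\,dx$, and that is in turn the Poisson correlation integral of \eqref{eq:poisson_def} extended to Schwartz inputs via Proposition \ref{prop:toSchwartz}. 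The one step requiring a little care is the claim that only the $\ell=0$ term survives on the right: for $B < 1/a$ no other $\ell/a$ lies in $[-B,B]^n$; at the endpoint $B = 1/a$ the remaining lattice points $\ell/a$ with some $|\ell_i|=1$ sit on the boundary of $[-1/a,1/a]^n$ and can be approached from the exterior in the $i$-th coordinate, so continuity of the Schwartz function $\hat{\eta}$ together with its vanishing outside the box forces $\hat{\eta}(\ell/a)=0$. This endpoint observation is the principal (and essentially the only) subtlety in the argument; everything else is bookkeeping with factorial moments.
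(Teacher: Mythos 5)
Your proposal is correct and follows essentially the same route as the paper: part (ii) is deduced from Theorem \ref{thm:upper_bound_nyquist}, and part (i) uses the discrete Poisson process on $a\mathbb{Z}$ of intensity $\lambda$ (your ``Poissonized lattice process'' is exactly Definition \ref{def:discretePoisson}), whose correlation measures assign mass $(a\lambda)^n$ to each point of $(a\mathbb{Z})^n$ as in Proposition \ref{prop:discretePoisson_corr}, followed by Poisson summation. Your handling of the endpoint $B=1/a$ — that a Schwartz function vanishes on the boundary of its support, so only the $\ell=0$ term survives — is precisely the observation the paper makes as well.
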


\begin{cor}[Poisson process mimicry - Nyquist bandwidth]
\label{thm:Nyquist_poisson}
Let $\lambda$ be arbitrary. For each $a > 0$, the Poisson process with intensity $\lambda$ can be mimicked at the Nyquist bandwidth $[-\frac{1}{2a},\frac{1}{2a}]$ by a u.l.m. point process supported on $a\mathbb{Z}$.
\end{cor}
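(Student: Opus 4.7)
The corollary is the special case $B = \tfrac{1}{2a}$ of Theorem \ref{thm:fullrange_poisson}(i), since $\tfrac{1}{2a} \le \tfrac{1}{a}$, so one route is simply to defer to that theorem. However, at the Nyquist bandwidth the construction is especially transparent and the sampling formalism from Subsection 1.4 applies directly, so I outline a self-contained proof along those lines. The plan is to write down the candidate $n$-point correlation measures on $(a\mathbb{Z})^n$ by applying the interpolation formula \eqref{eq:interpolate_bandlimited} to the Poisson correlation measures $d\rho_n(x) = \lambda^n d^n x$, then exhibit an explicit u.l.m.\ point process on $a\mathbb{Z}$ realizing them, and finally verify the mimicry equality \eqref{eq:bandlimited_equality}.

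For the first step, $\int_\mathbb{R} S((x-k)/a)\, dx = a$ by a standard computation, so \eqref{eq:interpolate_bandlimited} assigns mass $(a\lambda)^n$ to every lattice point $k = (k_1,\ldots,k_n) \in (a\mathbb{Z})^n$. This prescription suggests defining $u^\ast$ as follows: for each $j \in \mathbb{Z}$ independently, let $N_j$ be a Poisson random variable of mean $a\lambda$, and place $N_j$ copies of the point $aj$ in the configuration. The process $u^\ast$ is manifestly supported on $a\mathbb{Z}$ and is u.l.m., since $\#_{[L,L+1]}(u^\ast)$ is a sum of at most $\lceil 1/a\rceil + 1$ independent Poisson variables of mean $a\lambda$, whose moments of every order are bounded uniformly in $L$. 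To check that its $n$-level correlation measure places mass $(a\lambda)^n$ at each $k$, fix $(k_1,\ldots,k_n)$ and note that if the multiset $\{k_1,\ldots,k_n\}$ contains the value $k$ with multiplicity $m_k$, then the number of ordered $n$-tuples of distinct configuration indices landing at $(ak_1,\ldots,ak_n)$ equals $\prod_k N_k(N_k-1)\cdots(N_k-m_k+1)$. Taking expectations and using independence together with the factorial moment identity $\Ee[N_k(N_k-1)\cdots(N_k-m_k+1)] = (a\lambda)^{m_k}$ for a Poisson variable of mean $a\lambda$ yields $\prod_k (a\lambda)^{m_k} = (a\lambda)^n$, as required.

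For the final step, given any $\eta \in \Ss(\mathbb{R}^n)$ with $\supp \hat\eta \subset [-\tfrac{1}{2a},\tfrac{1}{2a}]^n$, I would expand $\eta$ via the Whittaker--Shannon formula \eqref{eq:interpolation}, multiply by $\lambda^n$, and integrate term by term; the result is $\sum_k \eta(k) (a\lambda)^n$, which is precisely the integral of $\eta$ against the correlation measure of $u^\ast$ computed above, so that \eqref{eq:bandlimited_equality} holds. The main technical obstacle is justifying the interchange of sum and integral — equivalently, showing absolute convergence of $\sum_k |\eta(k)|$ and of the implicit double sum appearing when integrating \eqref{eq:interpolation} against $\lambda^n d^n x$. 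Both follow from the Schwartz decay of $\eta$, so the argument is essentially a bookkeeping exercise once the correlation measure of $u^\ast$ has been identified. Notably, at the Nyquist bandwidth no realizability subtleties arise: the candidate process is built by hand as a superposition of independent Poisson clumps, which is why this boundary case is so clean.
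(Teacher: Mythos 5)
Your opening sentence is the paper's entire proof: the corollary is stated as an immediate consequence of Theorem \ref{thm:fullrange_poisson}(i) with $B=\tfrac{1}{2a}\le\tfrac1a$, and no separate argument is given. If you stop there, you are done. Your self-contained construction is also essentially the paper's: the process you build is exactly the discrete Poisson process of Definition \ref{def:discretePoisson}, your u.l.m.\ check is fine, and your factorial-moment computation of the correlation measure reproduces Proposition \ref{prop:discretePoisson_corr}.

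The gap is in your final verification step. You claim the interchange of sum and integral in $\int_{\mathbb{R}^n}\sum_k \eta(k)\prod_i S\bigl(\tfrac{x_i-k_i}{a}\bigr)\lambda^n\,d^nx$ ``follows from the Schwartz decay of $\eta$,'' but the obstruction has nothing to do with $\eta$: the sinc kernel is not absolutely integrable ($\int_{\mathbb{R}}|S(x)|\,dx=+\infty$), so each term of the Whittaker--Shannon expansion fails to lie in $L^1(\mathbb{R}^n,\lambda^n d^nx)$, Fubini/dominated convergence do not apply, and even the individual integrals $\int S((x-k)/a)\,dx=a$ are only conditionally convergent. This is precisely the difficulty the paper flags after \eqref{eq:sampling_correlations0} (``under mild hypotheses \dots we can interchange''), and it is why the actual proof of Theorem \ref{thm:fullrange_poisson}(i) avoids sampling altogether: one writes the left side of \eqref{eq:poisson_verify} as $\lambda^n\hat\eta(0)$ and applies Poisson summation to the right side, so that $(a\lambda)^n\sum_k\eta(k)=\lambda^n\sum_{j\in(a^{-1}\mathbb{Z})^n}\hat\eta(j)=\lambda^n\hat\eta(0)$, since the Schwartz function $\hat\eta$ is supported in $[-\tfrac{1}{2a},\tfrac{1}{2a}]^n\subset[-\tfrac1a,\tfrac1a]^n$ and vanishes at every nonzero point of $(a^{-1}\mathbb{Z})^n$. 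Either delete the sampling step and cite the theorem, or replace the interchange by this Poisson-summation argument.
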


We  have stated this corollary for mimicry of the Poisson process at the Nyquist bandwidth in order to compare with Corollary \ref{cor:Nyquist_sinekernel} for the sine process, which exhibits a very different behavior in this regime.

Turning to the sine process, we obtain  a partial answer on bandwidths when mimicry is possible, for a general sampling lattice $a\mathbb{Z}$.

\begin{thm}[Sine process mimicry - general bandwidth]
\label{thm:fullrange_sinekernel}
We have,
\begin{enumerate}
\item For all $0 < a \leq 1$, if $B \leq \frac{1-a}{a} $, then the sine process can be mimicked at bandwidth $[-B, B]$ by a u.l.m. point process supported on $a\mathbb{Z}$.
\item For all  $0< a \leq \frac{1}{2}$, if $B > \frac{1-a}{a}$, then the sine process cannot be mimicked at bandwidth $[-B, B]$ by a u.l.m. point process supported on $a\mathbb{Z}$.
\item If $a > \frac{1}{2}$ and $B \geq \frac{1}{2a}$, then the sine process cannot be mimicked at bandwidth $[-B, B]$ by a u.l.m. point process supported on $a\mathbb{Z}$.
\end{enumerate}
\end{thm}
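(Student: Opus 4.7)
For part (i), I construct the mimicking process explicitly as the determinantal point process $u^*$ on $a\mathbb{Z}$ with kernel
\[
K^*(am, an) = a S(a(m-n)).
\]
Since $a \le 1$, passing to the dual torus $\mathbb{R}/(1/a)\mathbb{Z}$ shows $K^*$ acts as multiplication by $\mathbf{1}_{[-1/2,1/2]}$, which fits inside one full period $[-1/(2a),1/(2a)]$. Hence $K^*$ is a projection kernel with spectrum $\{0,1\}$, defining a valid simple DPP $u^*$ with correlation measures $\rho_n^*(am_1,\ldots,am_n) = a^n \det[S(a(m_i-m_j))]$; uniform local moments follow from the bounded kernel. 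To verify mimicry for $B \le (1-a)/a$, I would expand $\det[S(x_i-x_j)]$ and $\det[aS(a(m_i-m_j))]$ as sums over permutations and compare Fourier transforms. Each cycle term $\prod_i S(x_i - x_{\sigma(i)})$ has Fourier transform supported on an affine subspace cut out by the cycle structure, with density coming from iterated convolutions of $\mathbf{1}_{[-1/2,1/2]}$; the lattice counterpart periodizes the same distribution by $1/a$ in the frequency directions normal to the subspace. The non-trivial periodic images stay outside $(-B,B)^n$ exactly when $B \le 1/a - 1 = (1-a)/a$, yielding the claimed agreement.

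For part (ii), I would argue by contradiction via the 2-point correlation. Assume a mimicking $u^*$ exists. Since $B > (1-a)/a \ge 1/(2a)$ for $a \le 1/2$, the translation-invariance result mentioned immediately after Theorem~\ref{thm:upper_bound_nyquist} applies, and one may take $u^*$ to be $a\mathbb{Z}$-translation-invariant in the correlation sense, writing $\rho_2^*(am,an) = c_{m-n}$ with $c_k \ge 0$. Setting $g(\xi) = \sum_k c_k e^{-2\pi i ak\xi}$, direct Poisson summation gives
\[
\widehat{\rho_2^*}(\xi_1,\xi_2) = \tfrac{1}{a} g(\xi_1) \sum_{j \in \mathbb{Z}} \delta(\xi_1 + \xi_2 - j/a),
\]
while for the sine process
\[
\widehat{\rho_2}(\xi_1,\xi_2) = \delta(\xi_1)\delta(\xi_2) - \mathrm{tri}(\xi_1)\delta(\xi_1 + \xi_2),
\]
where $\mathrm{tri} = \widehat{S^2}$ is supported in $[-1,1]$. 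Matching on $(-B,B)^2$ on the $j=0$ antidiagonal gives $g(\xi_1) = a[\delta(\xi_1) - \mathrm{tri}(\xi_1)]$ for $\xi_1 \in (-B,B)$; matching on the $j=1$ line $\xi_1 + \xi_2 = 1/a$, which enters the box since $B > 1/(2a)$, gives $g(\xi_1) = 0$ for $\xi_1 \in (1/a - B, B)$. Combining forces $\mathrm{tri}(\xi_1) = 0$ on the nonempty interval $(1/a - B, B)$, hence $1/a - B \ge 1$, i.e., $B \le (1-a)/a$, a contradiction.

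For part (iii), the case $B > 1/(2a)$ is immediate from the 2-point argument of (ii), as $(1-a)/a < 1/(2a) < B$ for $a > 1/2$ lets the same contradiction go through. The boundary case $B = 1/(2a)$ is more subtle: there the $j = \pm 1$ lines only graze the corners of $[-1/(2a),1/(2a)]^2$ (measure-zero intersection), so the 2-point constraint alone determines $g(\xi) = a[\delta(\xi) - \mathrm{tri}(\xi)]$ on one full period without immediate contradiction, yielding Fourier coefficient $c_0 = (a-1/2)^2 > 0$ and forcing $u^*$ to carry point multiplicities. I would seek the contradiction via analogous 3-point Fourier analysis: $\widehat{\rho_3^*}$ is supported on the planes $\xi_1 + \xi_2 + \xi_3 = j/a$, whereas $\widehat{\rho_3}$ has contributions on the $j=0$ plane together with the three 2-cycle lines $\xi_i + \xi_j = 0$, $\xi_k = 0$ carrying $\mathrm{tri}$-like densities. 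For $a > 1/2$, the $j = \pm 1$ planes intersect $(-1/(2a),1/(2a))^3$ in full 2-dimensional regions, forcing the periodic density in $\widehat{\rho_3^*}$ to vanish there; by $1/a$-periodicity this should contradict the values required on the $j=0$ plane to reproduce the 2-cycle line contributions of $\widehat{\rho_3}$.

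The main obstacle I anticipate is carrying out this 3-point Fourier bookkeeping cleanly for the boundary case $B = 1/(2a)$: both 2-cycle and 3-cycle contributions to $\widehat{\rho_3}$ live on the same $j=0$ hyperplane but on overlapping lower-dimensional subspaces, and disentangling them to pin down the specific periodicity violation is where most of the technical work sits.
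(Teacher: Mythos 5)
Parts (i) and (ii) of your proposal are essentially correct and track the paper's own arguments. For (i) you construct the same mimicking process (the discrete sine process on $a\mathbb{Z}$, whose existence as a projection-kernel determinantal process is exactly the argument delegated to the companion paper), and your verification via the permutation expansion of $\det[S(x_i-x_j)]$ and the location of the periodic images in frequency space is the paper's Poisson-summation computation \eqref{eq:ghat_eval} in dual language. For (ii) the paper instead pins down $\rho_2'$ pointwise via Theorem \ref{thm:nyquist_mimickry} and then tests against an $\eta$ with $\hat\eta$ concentrated near $(1/a-1,-(1/a-1))$; your distributional matching of $\widehat{\rho_2}$ and $\widehat{\rho_2^*}$ on the lines $\xi_1+\xi_2=j/a$ is the Fourier-dual of the same argument and is sound (the appeal to Corollary \ref{thm:nyquist_translation_inv_bound} for $a\mathbb{Z}$-translation invariance is legitimate since $B>\tfrac{1}{2a}$). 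Your observation that the same 2-point argument disposes of $a>\tfrac12$, $B>\tfrac{1}{2a}$ is also correct.

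The genuine gap is the boundary case $B=\tfrac{1}{2a}$ of part (iii), which is the substantive content of that part, and the strategy you propose for it will not work. First, at $B=\tfrac{1}{2a}$ the mimicry condition only constrains $\widehat{\rho_n^*}$ on the \emph{open} cube $(-\tfrac{1}{2a},\tfrac{1}{2a})^n$ (Schwartz test functions vanish on the boundary), and the closed cube is a fundamental domain for the periodicity lattice $(a^{-1}\mathbb{Z})^n$. Consequently every point of the $j=\pm1$ planes inside the open cube is periodicity-equivalent only to points \emph{outside} the open cube or on its boundary, where nothing is prescribed; the vanishing you want to propagate back to the $j=0$ plane lands on regions outside the cube, where it is consistent with the data rather than in conflict with it. So no contradiction can arise from Fourier support and periodicity alone, at $n=3$ or any $n$. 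Second, even granting uniqueness of $\rho_3^*$, the 3-level data is consistent: the paper computes the limiting third factorial moment $\Phi_3(a)=(2a-1)^3\ge 0$ for $a>\tfrac12$, so no positivity violation occurs at level three. The actual contradiction requires (a) extending mimicry to test functions $h_{a,\ell}(x)=S(x/a)+S((x-\ell)/a)$ whose Fourier transforms are supported in $[-\tfrac{1}{2a},\tfrac{1}{2a}]$ but do \emph{not} vanish at the endpoints — these lie outside the Schwartz class and need the bootstrapping Lemma \ref{lem:bandlimited_bootstrapping}; and (b) positivity constraints on the integer-valued count $X_\ell=\#_{\{0,\ell\}}(z')$ at level four: for $a\in(\tfrac12,1]$ one gets $\lim_\ell \Ee\, X_\ell(X_\ell-1)(X_\ell-2)(X_\ell-3)=\Phi_4(a)=(a-\tfrac12)^2(1-20a+12a^2)<0$, and for $a>1$ one needs a Hamburger moment determinant in the moments up to order four. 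Your proposal is missing both the enlarged test-function class and the realizability/positivity input, and without them the boundary case remains open.
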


This result gives a complete answer  for the sine process at the Nyquist bandwidth. An important feature of the answer is that existence of mimicry at the Nyquist bandwidth depends on the value of $a$. 

\begin{cor}[Sine process mimicry - Nyquist bandwidth]
\label{cor:Nyquist_sinekernel}
The sine  process can  be mimicked  by a u.l.m. point process  supported on $a\mathbb{Z}$ at the Nyquist bandwidth $[-\frac{1}{2a}, \frac{1}{2a}]$ if and only if $0 < a \leq \frac{1}{2}$.
\end{cor}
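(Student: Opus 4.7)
The plan is to derive the corollary as a direct consequence of Theorem \ref{thm:fullrange_sinekernel} by substituting the Nyquist bandwidth $B = \frac{1}{2a}$ into each of the three parts and checking which part applies for which values of $a$. The only work is an elementary comparison of the two rates $\frac{1}{2a}$ and $\frac{1-a}{a}$.

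For the existence direction, assume $0 < a \le \frac{1}{2}$. I would apply part (1) of Theorem \ref{thm:fullrange_sinekernel} with $B = \frac{1}{2a}$. To invoke this part I must verify $\frac{1}{2a} \le \frac{1-a}{a}$. Clearing denominators (both sides multiplied by $2a > 0$) this becomes $1 \le 2(1-a) = 2 - 2a$, i.e.\ $a \le \frac{1}{2}$, which holds by hypothesis. Therefore the sine process is mimicked at the bandwidth $[-\frac{1}{2a}, \frac{1}{2a}]$ by a u.l.m.\ point process supported on $a\mathbb{Z}$.

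For the non-existence direction, assume $a > \frac{1}{2}$. Here the Nyquist bandwidth $B = \frac{1}{2a}$ satisfies the inequality $B \ge \frac{1}{2a}$ trivially, so part (3) of Theorem \ref{thm:fullrange_sinekernel} applies and yields that no u.l.m.\ point process on $a\mathbb{Z}$ can mimic the sine process at the Nyquist bandwidth.

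There is no real obstacle in this argument; the substance lies entirely in Theorem \ref{thm:fullrange_sinekernel}. The corollary simply highlights that $a = \frac{1}{2}$ is the precise threshold at the Nyquist bandwidth, and the proof is complete upon noting that parts (1) and (3) perfectly cover the two cases $a \le \frac{1}{2}$ and $a > \frac{1}{2}$ at $B = \frac{1}{2a}$ (so that part (2), which would be needed to rule out sub-Nyquist bandwidths for small $a$, plays no role here).
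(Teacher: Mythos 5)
Your proof is correct and is exactly the derivation the paper intends: the corollary follows by specializing Theorem \ref{thm:fullrange_sinekernel} to $B=\frac{1}{2a}$, using the equivalence $\frac{1}{2a}\le\frac{1-a}{a}\iff a\le\frac12$ for existence via part (i) and part (iii) for non-existence when $a>\frac12$. The paper gives no separate proof of the corollary, so there is nothing further to compare.
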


\begin{figure}[t]
    \centering
    \begin{minipage}{0.48\textwidth}
        \centering
        \includegraphics[width=1\textwidth]{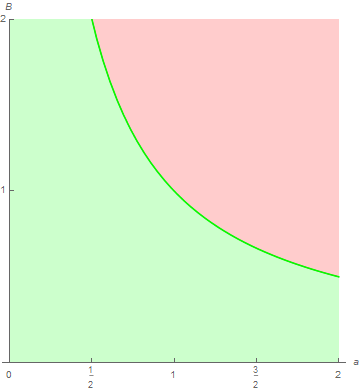} 
        \title{Mimicry for the Poisson process.}
    \end{minipage}\hfill
    \begin{minipage}{0.48\textwidth}
        \centering
        \includegraphics[width=1\textwidth]{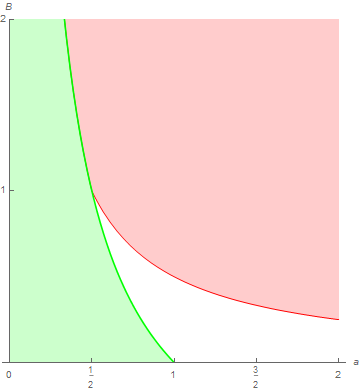} 
        \title{Mimicry for the sine  process.}
    \end{minipage}
	\caption{A plot of the regions $(a,B)$ for which the Poisson process and sine  process can be mimicked at bandwidth $B$ by a point process with uniform moments supported on $a\mathbb{Z}$. In the green region these point processes can be mimicked, while in the red region they cannot. In the white region of the second plot we currently have no information.}
\label{fig:GoNoGoPlots}
\end{figure}

This answer contrasts with the Poisson process case, where mimicry is  possible at the Nyquist bandwidth for every $a >0$.

For $0< a < \frac{1}{2}$, Theorem \ref{thm:fullrange_sinekernel} implies mimicry is possible slightly beyond the Nyquist bandwidth, by an amount depending on $a$. For $a > \frac{1}{2}$ we do not determine the complete range of mimicry, however Theorem \ref{thm:fullrange_sinekernel} shows that the  mimicry range is strictly below the Nyquist bandwidth.

We note that the sine process at $a=\frac{1}{2}$ is the largest value of $a$ where the Nyquist bandwidth can be achieved. This process plays an important role in \cite{LaRo19}.

The regions of $a,B$ spelled out by these theorems are plotted in Figure \ref{fig:GoNoGoPlots}. It would be very interesting to 
understand those $a,B$ not described by Theorem \ref{thm:fullrange_sinekernel}, left white in Figure \ref{fig:GoNoGoPlots}.

\subsection{An application to the Alternative Hypothesis}
\label{subsec:alterhyp}

The questions treated in this paper were motivated by a problem originating in number theory regarding zeros of the Riemann zeta function. We treat this problem in a companion paper \cite{LaRo19}, and give a brief description here.

Let the nontrivial zeros of the Riemann zeta function in the upper half-plane be listed as $\{ \beta_k + i \gamma_k \}_{k \in \mathbb{Z} } $ in increasing order of ordinate, taking
$0< \gamma_1 \le \gamma_2 \le \gamma_3 \cdots$. We define the rescaled zeta zero ordinates
$$
\tilde{\gamma}_k := \tfrac{1}{2\pi} \gamma_k \log \gamma_k.
$$
It is known that the $\tilde{\gamma_k}$ have on average a spacing  of $1$ between consecutive values.  (This result goes back to Riemann's original paper \cite{Ri1859}, for a proof see \cite[Corollary 14.2]{MoVa07}.) The \emph{Alternative Hypothesis} refers to the (seemingly outlandish) supposition that the spacings $\tilde{\gamma}_{k+1}-\tilde{\gamma}_k$ always lie approximately in the set $\tfrac{1}{2}\mathbb{Z}$. It is discussed in a 2004 AIM note \cite{AIM04}, Farmer, Gonek and Lee \cite[Section 2]{FarGonLee14} and in Baluyot \cite{Ba16}.

The Alternative Hypothesis is of special interest because of known connections between the spacings of zeros of the zeta function and the existence of Landau-Siegel zeros (see e.g. Conrey and Iwaniec \cite{CoIw02}). The Alternative Hypothesis is expected to be false, and indeed it is contradicted by the well-known GUE Hypothesis, 
that the spacing between zeros of the zeta function follow  a distribution  coming from  random matrix theory, concerning rescaled eigenvalues of the Gaussian Unitary Ensemble,  cf. \cite{Mo73},  \cite{Od87}, \cite{KaSa99}.  On the other hand, the GUE Hypothesis remains a conjecture, even assuming the Riemann Hypothesis, and it is natural to ask whether the Alternative Hypothesis can be ruled out just by \emph{what is known about the statistical distribution of zeros of the zeta function}. By this we mean the  known information about $n$-level correlation functions of zeros that was proved by Rudnick and Sarnak \cite{RuSa96} for all $n \ge 1$, extending results for $n=2$ and $n=3$ (\cite{Mo73}, \cite{He94}).

Rudnick and Sarnak characterized the correlation functions of zeros against certain band-limited test functions; their result amounts to knowing just a bit less than the assertion that the renormalized zeros mimic the sine process at a bandwidth $B=1$. In the companion paper \cite{LaRo19} we review an exact statement of their result, and using ideas related to those in this paper, we show that the Alternative Hypothesis cannot be ruled out by what is known about the statistical distribution of zeros of the zeta function. This is done via the construction of a counterexample Alternative Hypothesis point process which uses the $1/2$-discrete sine process in its construction.

Recently Tao has independently treated the Alternative Hypothesis (using slightly different methods) in a blog post \cite{Ta19}. He constructs an alternate distribution $ACUE$  for  eigenvalues of unitary matrices $U(N)$; in  a suitable scaling limit as $N \to \infty$ his  construction yields the Alternative Hypothesis point process treated here. 

The present paper investigates conditions permitting mimicking by a lattice process $a\mathbb{Z}$  in greater generality than  \cite{LaRo19}. In particular, Corollary \ref{cor:Nyquist_sinekernel} reveals that  the ability  to construct a counterexample Alternative Hypothesis point process depends upon quite special properties of the sine process and the lattice-bandwidth combination $(a,B) = (1/2,1)$. In particular (see Figure \ref{fig:GoNoGoPlots}), the point $(a,B) = (1/2,1)$ occurs on the boundary of mimicry for the sine process, and even a slight perturbation off this lattice spacing or bandwidth would no longer allow for it.

We note that while the Band-Limited Mimicry Problem as posed above seems natural from the perspective of both applications to number theory and what one is able to say about it, one may reasonably ask broader questions. For instance, one may generalize Definition \ref{dfn:bandlimited_mimicry} so that \eqref{eq:bandlimited_equality} holds for a different collection of functions $\eta$ than those with Fourier transform supported on $[-B,B]^n$ (e.g. one might allow $\eta$ to be bandlimited in $[-B_n,B_n]^n$ for constants $B_n$ which vary with $n$). It would be interesting to see if a more general theory along these lines can be developed, but we do not pursue this here.

\section{The Nyquist bandwidth}
\label{sec:2}
The Nyquist bandwidth has an important implications regarding correlation measures. In this section we prove  for $B$ strictly larger than the Nyquist bandwidth $\frac{1}{2a}$ that  all the correlation measures of any u.l.m. mimicking discrete point process on $a \mathbb{Z}$ are uniquely determined. This result does not address the question whether any such mimicking  discrete point process exists. We then study translation invariance (in the correlation sense) and  deduce that $\mathbb{R}$ translation invariant point processes cannot be mimicked above twice the Nyquist bandwidth.


\subsection{Uniqueness of correlation functions above the Nyquist bandwidth}
In what follows for $0 < \varepsilon < 1/2$, we let $\beta_\varepsilon$ be an even  bump function with the following four properties:
\begin{equation}
\label{eq:beta_0}
0 \leq \beta_\epsilon(\xi) \leq 1, \quad \textrm{for all}\, \xi \in \mathbb{R},
\end{equation}
\begin{equation}
\label{eq:beta_1}
\beta_\varepsilon(\xi) = 1,\quad \textrm{for}\, |\xi|\leq 1/2-\epsilon,
\end{equation}
\begin{equation}
\label{eq:beta_2}
\beta_\varepsilon(\xi) = 0,\quad \textrm{for}\, |\xi|\geq 1/2+\epsilon,
\end{equation}
\begin{equation}
\label{eq:beta_3}
\beta_\epsilon(\tfrac{1}{2}+x) = 1 - \beta_\epsilon(\tfrac{1}{2} - x),\quad \textrm{for all}\, 0 \leq x < 1/2.
\end{equation}
A `bump function' is any function that is $C^{\infty}$-smooth and compactly supported. We omit the  details in constructing such  bump functions, see Lee \cite[Lemma 2.22]{Lee13}. The function  $\beta_\epsilon(\xi)$ should be seen as a smooth approximation to the indicator function of the interval from $[-\tfrac{1}{2},\tfrac{1}{2}]$. Note further that the functions $...,\beta_\epsilon(\xi-1), \beta_\epsilon(\xi), \beta_\epsilon(\xi+1),...$ form a partition of unity for the real line.

\begin{thm}
\label{thm:nyquist_mimickry}
If a u.l.m. point process  $u$ can be mimicked at bandwidth $B$ by a  point process $u'$ supported on $a\mathbb{Z}$, and if $B > \frac{1}{2a}$, then the correlation measures $\rho'_n$  of $u'$, which are supported on $(a\mathbb{Z})^n$, are uniquely determined and satisfy
\begin{equation}
\label{eq:above_nyquist}
\rho'_n(\{ (k_1, k_2, \cdots, k_n) \}) = \int_{\mathbb{R}^n} \prod_{i=1}^n \hat{\beta}_\varepsilon\Big(\frac{x_i-k_i}{a}\Big)\, d\rho_n(x)\quad \textrm{for all} \; k \in (a\mathbb{Z})^n,
\end{equation} 
for any bump function $\beta_{\varepsilon}$ satisfying  \eqref{eq:beta_0} -\eqref{eq:beta_3} and for all sufficiently small $\varepsilon$ (where sufficiently small depends on $B$).
\end{thm}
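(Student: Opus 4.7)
The plan is to construct, for each lattice point $k \in (a\mathbb{Z})^n$, a Schwartz test function $\eta_k$ that is admissible for mimicry at bandwidth $B$ and that satisfies $\eta_k(m) = \delta_{m,k}$ for every $m \in (a\mathbb{Z})^n$. Given such a function, applying the mimicry hypothesis together with Proposition \ref{prop:toSchwartz} to both $u$ and $u'$ will immediately recover $\rho'_n(\{k\})$ as the prescribed integral against $d\rho_n$, and uniqueness of $\rho'_n$ will then follow since the right-hand side of \eqref{eq:above_nyquist} depends only on data intrinsic to $u$.

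The natural candidate is
\[
\eta_k(x_1,\dots,x_n) \;:=\; \prod_{i=1}^n \hat\beta_\varepsilon\Big(\frac{x_i - k_i}{a}\Big),
\]
which lies in $\Ss(\mathbb{R}^n)$ since $\beta_\varepsilon$ is a compactly supported bump function and hence $\hat\beta_\varepsilon$ is Schwartz. A direct Fourier transform computation gives $\hat\eta_k(\xi) = a^n\prod_{i=1}^n e(-k_i\xi_i)\beta_\varepsilon(a\xi_i)$, which is supported where $|\xi_i| \le (\tfrac12 + \varepsilon)/a$ for each $i$. Since $B > \tfrac{1}{2a}$, choosing any $\varepsilon < \min(aB - \tfrac12, \tfrac12)$ gives $\supp \hat\eta_k \subset [-B,B]^n$, so $\eta_k$ is admissible for mimicry.

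The key Fourier-analytic ingredient is the identity $\hat\beta_\varepsilon(t) = \delta_{t,0}$ for all $t \in \mathbb{Z}$. To see this, I would first show that the integer shifts of $\beta_\varepsilon$ form a partition of unity: the support and evenness of $\beta_\varepsilon$ together with the symmetry property \eqref{eq:beta_3} imply that $\sum_{m \in \mathbb{Z}} \beta_\varepsilon(\xi - m) \equiv 1$ on $\mathbb{R}$ (only adjacent terms overlap, and the overlap calculation reduces exactly to \eqref{eq:beta_3}). This $1$-periodic function is the constant $1$, whose Fourier coefficients are $\delta_{t,0}$; on the other hand, unfolding the periodic integral expresses these coefficients as $\hat\beta_\varepsilon(t)$. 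Consequently $\eta_k(m) = \prod_i \hat\beta_\varepsilon((m_i - k_i)/a) = \delta_{m,k}$ for every $m \in (a\mathbb{Z})^n$.

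To finish, applying Proposition \ref{prop:toSchwartz} to $u'$ and using that $\rho'_n$ is atomic on $(a\mathbb{Z})^n$ yields
\[
\Ee \distinctsum \eta_k(u'_{j_1},\dots,u'_{j_n}) \;=\; \sum_{m \in (a\mathbb{Z})^n} \eta_k(m)\,\rho'_n(\{m\}) \;=\; \rho'_n(\{k\}),
\]
while the same proposition applied to $u$ evaluates the analogous expectation as $\int_{\mathbb{R}^n} \eta_k(x)\,d\rho_n(x)$. The mimicry hypothesis equates these two, giving exactly \eqref{eq:above_nyquist}. The main conceptual step is the partition-of-unity/Fourier-coefficient identity for $\beta_\varepsilon$; once that is in place the remaining argument is essentially bookkeeping. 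The strict inequality $B > \tfrac{1}{2a}$ is used precisely to provide the slack $\varepsilon > 0$ needed to smooth the indicator of $[-\tfrac12,\tfrac12]$ into an admissible Schwartz test function --- at $B = \tfrac{1}{2a}$ this cushion disappears and uniqueness can genuinely fail, as the Poisson example in Proposition \ref{prop:therearetwo} will illustrate.
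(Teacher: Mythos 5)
Your proposal is correct and follows essentially the same route as the paper: the same test function $\eta_k(x)=\prod_i\hat\beta_\varepsilon((x_i-k_i)/a)$, the same bandwidth check, and the same key identity that $\hat\beta_\varepsilon$ equals $\mathbf{1}_{\{0\}}$ on the integers (you derive it from the partition-of-unity/periodization viewpoint, the paper by directly unfolding the integral over $[-3/2,3/2]$, but these are the same computation). The concluding application of mimicry together with Proposition \ref{prop:toSchwartz} matches the paper's argument.
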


\begin{remark}
{\rm 
For $\varepsilon > 0$, the function $\hat{\beta}_\varepsilon(x)$ is a Schwartz function and the integral \eqref{eq:above_nyquist} converges due to the assumption of uniform local moments on the point process $u$.
}
\end{remark}

\begin{proof} 
We begin by showing that for $x \in (a\mathbb{Z})^n$,
\begin{equation}
\label{eq:beta_lattice}
\prod_{i=1}^n \hat{\beta}_\varepsilon\Big(\frac{x_i-k_i}{a}\Big) = \mathbf{1}_k(x).
\end{equation}
Note that
\begin{equation}
\label{eq:beta_fourier}
\prod_{i=1}^n \hat{\beta}_\varepsilon\Big(\frac{x_i-k_i}{a}\Big) = \prod_{i=1}^n \int_\mathbb{R} \beta_\varepsilon(\xi) e(\xi(x_i-k_i)/a)\, d\xi.
\end{equation}
For fixed $i$, if  $(x_i-k_i)/a \in \mathbb{Z}$, then $f(\xi)= e(\xi(x_i-k_i)/a)$ has period $1$ and so using the properties \eqref{eq:beta_1}, \eqref{eq:beta_2}, and \eqref{eq:beta_3},
\begin{align}
\label{eqn:one-variable}
\int_\mathbb{R} \beta_\varepsilon(\xi) e(\xi(x_i-k_i)/a)\, d\xi
&= \int_{-3/2}^{3/2} \beta_\varepsilon(\xi)e(\xi(x_i-k_i)/a)\, d\xi \nonumber\\
& = \int_{-1/2}^{1/2} [\beta_\varepsilon(\xi) + \beta_\varepsilon(-1+\xi) + \beta_\varepsilon(1+\xi)] e(\xi(x_i-k_i)/a)\, d\xi \nonumber\\
&= \int_{-1/2}^{1/2} 1 \cdot e(\xi(x_i-k_i)/a)\, d\xi  \nonumber\\
&= \mathbf{1}_{k_i}(x_i).
\end{align}
Applying this formula for each $i$  in \eqref{eq:beta_fourier} yields \eqref{eq:beta_lattice}. (Note that the equality \eqref{eqn:one-variable}  does not hold if $k_i \in \mathbb{R} \smallsetminus a\mathbb{Z}$.)

Let $\eta \in \mathcal{S}(\mathbb{R}^n)$ denote
\begin{equation}
\label{eq:Nyquist_eta}
\eta(x) := \prod_{i=1}^n \hat{\beta}_\varepsilon\Big(\frac{x_i-k_i}{a}\Big).
\end{equation}
We have
$$
\hat{\eta}(\xi) = a^n e(-k\cdot \xi) \prod_{i=1}^n \beta_\varepsilon(a \xi_i).
$$
which is supported in $[ -\frac{1}{2a} - \frac{\epsilon}{a}, \frac{1}{2a} + \frac{\epsilon}{a}]^n$. 
If $B > 1/(2a)$, then for sufficiently small $\varepsilon >0$ we have $\supp\, \hat{\eta} \subset [-B,B]^n$.

From the support  of $u'$ falling in $a\mathbb{Z}$, and from \eqref{eq:beta_lattice}  we have
\begin{eqnarray}
\label{eq:rho-formula}
\rho_n'(\{k\}) &= &\mathbb{E} \distinctsum \mathbf{1}_{k} (u_{j_1}',...,u_{j_n}') \nonumber \\
&=& \mathbb{E} \distinctsum \eta(u_{j_1}',...,u_{j_n}').
\end{eqnarray}
If $u$ is mimicked at bandwidth $B$ by $u'$, since $\supp\, \hat{\eta} \subset [-B,B]^n$ we have
$$
\mathbb{E} \distinctsum \eta(u_{j_1}',...,u_{j_n}') =\mathbb{E}\distinctsum \eta(u_{j_1}, ..., u_{j_n}) = \int_{\mathbb{R}^n} \eta(x) d\rho_n(x),
$$
where the second equality holds by Proposition \ref{prop:toSchwartz}. Combining this equality with \eqref{eq:rho-formula} gives
$$
\rho_n'(\{k\}) =\int_{\mathbb{R}^n} \eta(x) d\rho_n(x) = \int_{\mathbb{R}^n}  \prod_{i=1}^n \hat{\beta}_\varepsilon\Big(\frac{x_i-k_i}{a}\Big)\, d\rho_n(x).
$$
as asserted. 
\end{proof}

Theorem \ref{thm:nyquist-dichotomy-1} is a direct consequence of Theorem \ref{thm:nyquist_mimickry}.

\subsection{Translation-invariant point processes and  Nyqist bandwidth}
\label{subsec:21}

Recall from Section \ref{subsec:mainresults} that a point process on $\mathbb{R}$ is {\em $\mathbb{R}$ translation-invariant} (in the correlation sense) if every $n$-level correlation function is translation invariant: For all $n \ge 1$,
$$
\rho_n(x_1, x_2, ..., x_n)= \rho_n(x_1+t, x_2+t , ..., x_n+t) \quad \mbox{for all} \quad  t \in \mathbb{R}.
$$
A point process  is {\em $a \mathbb{Z}$ translation-invariant } (in the correlation sense)  if every $n$-level correlation function is translation invariant:
$\rho_n(x_1, x_2, ..., x_n)= \rho_n(x_1+t, x_2+t , ..., x_n+t)$ for all $t \in a\mathbb{Z}$.

\begin{cor}
\label{thm:nyquist_translation_inv_bound}
Let $u$ be a  u.l.m. point process that is $\mathbb{R}$ translation-invariant in the correlation sense. Suppose that $u$ can be mimicked by a  point process $u'$ supported on $a\mathbb{Z}$ at bandwidth $B$ with  $B > \frac{1}{2a}$. Then $u'$ is  $a \mathbb{Z}$ translation-invariant in the correlation sense. That is, for each $n \ge 1$ the (uniquely determined)  correlation measure $\rho'_n$  of $u'$, which is supported on $(a\mathbb{Z})^n$, is $a \mathbb{Z}$-translation invariant.
\end{cor}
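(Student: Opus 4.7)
The plan is to deduce this corollary directly from the explicit formula in Theorem \ref{thm:nyquist_mimickry}, using translation invariance of $\rho_n$ and a change of variables. The hypothesis $B > \frac{1}{2a}$ ensures we may apply \eqref{eq:above_nyquist}, so $\rho_n'(\{k\})$ is fully determined by an integral against $\rho_n$.

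First I would fix $n \ge 1$, choose $t \in a\mathbb{Z}$, and a lattice point $k = (k_1, \ldots, k_n) \in (a\mathbb{Z})^n$. By Theorem \ref{thm:nyquist_mimickry}, for $\varepsilon$ sufficiently small,
\begin{equation*}
\rho_n'(\{(k_1+t, \ldots, k_n+t)\}) = \int_{\mathbb{R}^n} \prod_{i=1}^n \hat{\beta}_\varepsilon\Big(\frac{x_i - k_i - t}{a}\Big)\, d\rho_n(x).
\end{equation*}

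Next I would make the change of variables $y_i = x_i - t$ in the integral. Since $\hat{\beta}_\varepsilon$ is a Schwartz function, the integral converges absolutely by the u.l.m.\ hypothesis (as already observed in the remark after Theorem \ref{thm:nyquist_mimickry}), so this substitution is valid. The integral becomes
\begin{equation*}
\int_{\mathbb{R}^n} \prod_{i=1}^n \hat{\beta}_\varepsilon\Big(\frac{y_i - k_i}{a}\Big)\, d\rho_n(y_1+t, \ldots, y_n+t).
\end{equation*}

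Finally I would invoke $\mathbb{R}$-translation invariance of $\rho_n$ to replace $d\rho_n(y_1+t, \ldots, y_n+t)$ by $d\rho_n(y_1, \ldots, y_n)$. The resulting integral is exactly the right-hand side of \eqref{eq:above_nyquist} for the point $k$, so it equals $\rho_n'(\{k\})$. Since $n$, $k \in (a\mathbb{Z})^n$, and $t \in a\mathbb{Z}$ were arbitrary, this establishes $a\mathbb{Z}$-translation invariance of $\rho_n'$. I do not anticipate any real obstacle: the only subtlety is to verify absolute convergence so that the change of variables is legitimate, but this follows immediately from the u.l.m.\ property together with the Schwartz decay of $\hat{\beta}_\varepsilon$.
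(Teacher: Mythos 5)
Your proposal is correct and is essentially identical to the paper's own argument: both apply the reconstruction formula \eqref{eq:above_nyquist} at a translated lattice point, change variables, and invoke $\mathbb{R}$-translation invariance of $\rho_n$ to conclude. No gaps.
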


\begin{proof}
Since $B > \frac{1}{2a}$, by Theorem \ref{thm:nyquist_mimickry} the correlation functions of the process $u'$ are uniquely determined. The $a \mathbb{Z}$-translation invariance of all the correlation functions $\rho^{'}_n(k_1, k_2, ..., k_n)$ of $u'$ then follows from \eqref{eq:above_nyquist}. In more detail: we have, for each $(k_1, k_2, ..., k_n) \in (a\mathbb{Z})^n$ and any translation $k \in a\mathbb{Z}$, 
\begin{eqnarray*}
\label{eq:abov_-translation_invariance}
\rho'_n(\{ (k_1-k, \cdots, k_n -k)\} ) &= & \int_{\mathbb{R}^n} \prod_{i=1}^n \hat{\beta}_\varepsilon\Big(\frac{x_i-k_i+k}{a}\Big)\, d\rho_n(x_1, \cdots , x_n)\\
&=&  \int_{\mathbb{R}^n} \prod_{i=1}^n \hat{\beta}_\varepsilon\Big(\frac{y_i-k_i}{a}\Big)\, d\rho_n(y_1-k, \cdots, y_n-k)\\
&=&  \int_{\mathbb{R}^n} \prod_{i=1}^n \hat{\beta}_\varepsilon\Big(\frac{y_i-k_i}{a}\Big)\, d\rho_n(y_1, \cdots, y_n)\\
&= & \rho'_n(k_1, \cdots, k_n),
\end{eqnarray*}
with the third equality holding by $\mathbb{R}$-translation invariance of the $n$-point correlation function  $\rho_n$ of $u$, and the first and last inequality hold (for  $(k_1-k, \cdots , k_n-k)  \in (a\mathbb{Z})^n$) by \eqref{eq:above_nyquist}. (Actually only the $a\mathbb{Z}$-translation invariance of $\rho_n$ is needed for the third equality to hold.)   
\end{proof}

\subsection{ Proof of  Theorem \ref{thm:upper_bound_nyquist}.}
\label{subsec:23}

\begin{proof} [Proof of Theorem \ref{thm:upper_bound_nyquist}.]
We suppose that there exists a non-trivial process $u'$ supported on $ a\mathbb{Z}$ that mimics $u$ to bandwidth $B > \frac{1}{a}$ and  obtain a contradiction. By the result of Theorem \ref{thm:nyquist_translation_inv_bound} the process $u'$ is $a\mathbb{Z}$-translation invariant in the correlation sense.

On the other hand, letting $a'=\frac{1}{2}a$, we have that $u'$ is also supported on the lattice $a'\mathbb{Z} = \frac{1}{2}a\mathbb{Z}$, as this includes $a\mathbb{Z}$ as a sublattice. But the process $u'$ mimics $u$ to bandwidth $B> \frac{1}{a} = \frac{1}{2a'},$ which is above the Nyquist bandwidth for the lattice $a'\mathbb{Z}$. Therefore Theorem \ref{thm:nyquist_translation_inv_bound} applies to say that this process $u'$ must be $a'\mathbb{Z}$-translation invariant in the correlation sense.

However $u'$ is manifestly not $a'\mathbb{Z}$ translation-invariant in the correlation sense, because it is supported on $a\mathbb{Z}$, a lattice which does not include the point $a'$. Indeed, because $u'$ is non-trivial and translation invariant in the correlation sense on $a\mathbb{Z}$, we must have $\Ee \#_{\{0\}}(u') > 0$. Then translation invariance in the correlation sense on $a'\mathbb{Z}$ implies $\Ee \#_{\{a'\}}(u') > 0$, which cannot be the case if $u'$ is supported on $a\mathbb{Z}$. 
\end{proof}

We note that Theorem \ref{thm:upper_bound_nyquist} is not true if the assumption of translation invariance is dropped. Indeed, consider the point process which consists of a single point located at the position $0$ almost surely. Since for any $a$ this point process is already itself supported on the lattice $a\mathbb{Z}$, mimicry occurs for any parameters $(a,B)$.

\section{Mimicry of the Poisson process}
\label{sec:poisson}

\subsection{The discrete Poisson process}
\label{subsec:discretePoisson}
In this section we prove Theorem \ref{thm:fullrange_poisson}, describing when the Poisson process can be mimicked.

It ends up that in the range of $a, B$ where the process can be mimicked, it is mimicked just by the discrete Poisson process.

\begin{dfn}
\label{def:discretePoisson}
For any $a>0$ and any $\lambda>0$ the \textbf{discrete Poisson process on $a\mathbb{Z}$ of intensity $\lambda$} is the point process $w^\ast=(w_j^{\ast})_{j \in \mathbb{Z}}$ such that for each $k \in a\mathbb{Z}$, the number of points at each site $\#_k(w^\ast)$ are independent and identically distributed random variables, with each variable a Poisson random variable with mean $a\lambda$.
\end{dfn}

The discrete Poisson process on $a\mathbb{Z}$ of intensity $\lambda$  is never a simple point process.

\begin{prop}
\label{prop:discretePoisson_corr}
Letting $w^\ast$ be the discrete Poisson process on $a\mathbb{Z}$ of intensity $\lambda$, we have for all $n\geq 1$ and $\phi \in \mathcal{S}(\mathbb{R}^n)$,
$$
\Ee \distinctsum \phi(w_{j_1}^\ast,..., w_{j_n}^\ast) = \sum_{k \in (a\mathbb{Z})^n} (a\lambda)^n \phi(k).
$$
\end{prop}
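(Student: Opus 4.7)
The plan is to first establish the formula for compactly supported continuous $\phi$, which identifies the $n$-level correlation measure of $w^\ast$, and then extend to Schwartz $\phi$ via Proposition~\ref{prop:toSchwartz}.

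For $\phi \in C_c(\mathbb{R}^n)$, I would group the distinct sum according to which lattice sites the values land on. Write $N_k := \#_{\{k\}}(w^\ast)$ for each $k \in a\mathbb{Z}$, so by Definition~\ref{def:discretePoisson} the $N_k$ are independent $\mathrm{Poisson}(a\lambda)$ random variables. Given a target tuple $(k_1, \ldots, k_n) \in (a\mathbb{Z})^n$, put $m_k(k_1,\ldots,k_n) := \#\{i : k_i = k\}$, so $\sum_k m_k = n$. The number of distinct ordered index tuples $(j_1, \ldots, j_n)$ with $w_{j_i}^\ast = k_i$ for all $i$ factors as $\prod_{k \in a\mathbb{Z}} (N_k)_{m_k}$, where $(N)_m := N(N-1)\cdots(N-m+1)$ is the falling factorial, and equals zero when $N < m$. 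Distinctness of the $j_i$ across different sites is automatic, while within each site the falling factorial counts the distinct ordered choices. Hence, since $\phi$ has compact support so only finitely many lattice tuples contribute,
\begin{equation*}
\distinctsum \phi(w_{j_1}^\ast, \ldots, w_{j_n}^\ast) = \sum_{(k_1,\ldots,k_n) \in (a\mathbb{Z})^n} \phi(k_1,\ldots,k_n) \prod_{k \in a\mathbb{Z}} (N_k)_{m_k(k_1,\ldots,k_n)}.
\end{equation*}

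Taking expectations, independence of the $N_k$ across distinct sites factors the expectation as $\prod_k \mathbb{E}(N_k)_{m_k}$. The key identity is the classical Poisson factorial moment formula: if $N \sim \mathrm{Poisson}(\mu)$, then $\mathbb{E}(N)_m = \mu^m$ (immediate from the probability generating function $\mathbb{E} z^N = e^{\mu(z-1)}$, or by a direct computation with the Poisson pmf). Applied to each $N_k$ with mean $a\lambda$ and using $\sum_k m_k = n$, this yields $\prod_k (a\lambda)^{m_k} = (a\lambda)^n$, which is independent of the tuple $(k_1,\ldots,k_n)$. This proves the formula for $\phi \in C_c(\mathbb{R}^n)$ and, in particular, identifies the $n$-level correlation measure $\rho_n^\ast$ of $w^\ast$ as $(a\lambda)^n$ times counting measure on $(a\mathbb{Z})^n$.

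To extend to Schwartz $\phi$, I verify $w^\ast$ is u.l.m.: for any $L \in \mathbb{R}$, the count $\#_{[L,L+1]}(w^\ast)$ is a sum of at most $\lceil 1/a \rceil + 1$ independent $\mathrm{Poisson}(a\lambda)$ variables, hence itself Poisson with parameter at most $(\lceil 1/a \rceil + 1) a\lambda$, which has all moments bounded uniformly in $L$. Proposition~\ref{prop:toSchwartz} then immediately promotes the established identity from $C_c(\mathbb{R}^n)$ to all $\phi \in \mathcal{S}(\mathbb{R}^n)$, with the series on the right side converging absolutely by the rapid decay of $\phi$. The only real subtlety in the argument is the combinatorial bookkeeping when the $k_i$ are not all distinct, but the falling factorial $(N_k)_{m_k}$ handles the possibility of multiple indices pointing to the same lattice site cleanly; I do not anticipate any other substantive obstacle.
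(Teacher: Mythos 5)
Your proposal is correct and follows essentially the same route as the paper's proof, which likewise rests on the independence of the site counts $\#_k(w^\ast)$ and the Poisson factorial moment identity $\mathbb{E}(N)_m = (a\lambda)^m$. Your write-up simply makes explicit the combinatorial grouping by lattice sites and the u.l.m.\ verification needed to pass from $C_c(\mathbb{R}^n)$ to Schwartz class, details the paper leaves implicit.
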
 
\begin{proof} 
This follows from the independence of the random variables $\#_k(w^\ast)$ for different $k$, and the fact that the factorial moments of Poisson random variables satisfy
$$
\mathbb{E}\, \#_k(w^\ast) (\#_k(w^\ast)-1)\cdots (\#_k(w^\ast)-(m-1)) = (a\lambda)^m.
$$
\end{proof}

\subsection{Mimicry for $B \leq \frac{1}{a}$, no mimicry otherwise}
\label{subsec:Poisson_existence}
We now show the first part of Theorem \ref{thm:fullrange_poisson}, that the Poisson process can be mimicked by the discrete Poisson process. The proof depends on the Poisson summation formula, which we recall in a suitable form.

\begin{thm}[Poisson summation formula]
\label{thm:poisson_summation}
For all $\phi \in \mathcal{S}(\mathbb{R}^n)$,
$$
a^n \sum_{k \in (a\mathbb{Z})^n} \phi(k) = \sum_{j \in (a^{-1}\mathbb{Z})^n} \hat{\phi}(j).
$$
\end{thm}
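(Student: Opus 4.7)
The plan is to use the classical periodization argument. Define $\Phi: \mathbb{R}^n \to \mathbb{C}$ by
$$
\Phi(x) = \sum_{k \in (a\mathbb{Z})^n} \phi(x+k).
$$
First I would verify that this sum converges absolutely and uniformly on compact subsets of $\mathbb{R}^n$, and that the same is true after termwise differentiation of any order; both facts follow from the rapid decay of $\phi \in \mathcal{S}(\mathbb{R}^n)$, which dominates any lattice sum by a convergent majorant. Consequently $\Phi$ is $C^\infty$, and by construction it is $(a\mathbb{Z})^n$-periodic.

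Next I would expand $\Phi$ in its Fourier series on the torus $\mathbb{R}^n/(a\mathbb{Z})^n$. The characters on this torus are indexed by the dual lattice $(a^{-1}\mathbb{Z})^n$, so one writes
$$
\Phi(x) = \sum_{j \in (a^{-1}\mathbb{Z})^n} c_j\, e(j\cdot x),
$$
with coefficients
$$
c_j = \frac{1}{a^n}\int_{[0,a]^n}\Phi(x)\, e(-j\cdot x)\, dx.
$$
Substituting the definition of $\Phi$ and interchanging sum and integral (justified by absolute convergence), the translated copies of $\phi$ reassemble into an integral over all of $\mathbb{R}^n$, yielding $c_j = a^{-n}\hat{\phi}(j)$.

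Finally, evaluating $\Phi$ at $x=0$ in two ways gives
$$
\sum_{k \in (a\mathbb{Z})^n} \phi(k) \;=\; \Phi(0) \;=\; \sum_{j \in (a^{-1}\mathbb{Z})^n} c_j \;=\; \frac{1}{a^n}\sum_{j \in (a^{-1}\mathbb{Z})^n} \hat{\phi}(j),
$$
which is the claim after multiplying through by $a^n$. The pointwise validity of the Fourier series at $x=0$ is guaranteed because $\Phi$ is smooth, so its Fourier series converges absolutely and uniformly to $\Phi$. Since each of the three analytic steps—uniform convergence of the periodization, absolute convergence of the Fourier series, and the Fubini-type swap producing $\hat{\phi}(j)$—is an immediate consequence of Schwartz decay, there is no substantive obstacle; the main item to be careful about is simply bookkeeping the factor $a^n$ that arises from the volume of the fundamental domain and the scaling of the dual lattice.
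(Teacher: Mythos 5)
Your proof is correct, but it takes a different route from the paper. The paper's proof is a two-line reduction: it cites the standard case $a=1$ of Poisson summation from Grafakos, and then observes that substituting $\phi(x) \mapsto a^n\phi(ax)$ (whose Fourier transform is $\hat{\phi}(\xi/a)$) converts that identity into the stated one for general $a$. You instead prove the summation formula from first principles via the classical periodization argument: form $\Phi(x)=\sum_{k\in(a\mathbb{Z})^n}\phi(x+k)$, expand it in a Fourier series on $\mathbb{R}^n/(a\mathbb{Z})^n$, identify the coefficients as $a^{-n}\hat{\phi}(j)$ over the dual lattice $(a^{-1}\mathbb{Z})^n$, and evaluate at $x=0$. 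All the analytic steps you flag (uniform convergence of the periodization, the unfolding of the integral using $e(-j\cdot k)=1$ for $j$ in the dual lattice, and absolute convergence of the Fourier series of the smooth periodic function $\Phi$) are indeed immediate from Schwartz decay, and your normalization of the fundamental-domain volume $a^n$ is right, so the bookkeeping of the factor $a^n$ comes out correctly. What your approach buys is self-containedness — you are essentially reproving the cited theorem rather than invoking it — at the cost of length; the paper's rescaling is the more economical argument given that the $a=1$ case is taken as known.
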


\begin{proof} 
The usual formulation of Poisson summation states this for $a=1$ (see \cite[Theorem 3.1.17]{Gr14}): $\sum_{k\in \mathbb{Z}} \phi(k) = \sum_{j \in \mathbb{Z}} \hat{\phi}(j)$. Replacing $\phi(x)$ with $a^n \phi(ax)$ yields the result for general $a$.
\end{proof}

\begin{proof}[Proof of Theorem \ref{thm:fullrange_poisson}, part (i)]
We show that for $B \leq 1/a$, the Poisson process with intensity $\lambda$ is mimicked at bandwidth $[-B,B]$ by the discrete Poisson process on $a\mathbb{Z}$ with intensity $\lambda$. For $\eta \in \mathcal{S}(\mathbb{R}^n)$ with $\supp \hat{\eta} \subset [-B,B]^n$, we must show that
$$
\Ee \distinctsum \eta(w_{j_1},...,w_{j_n}) = \Ee \distinctsum \eta(w_{j_1}^{\ast},...,w_{j_n}^{\ast}).
$$
Using \eqref{eq:poisson_def} for the Poisson process and Proposition \ref{prop:discretePoisson_corr} for the discrete Poisson process, this requires the equality
\begin{equation}
\label{eq:poisson_verify}
\int_{\mathbb{R}^n} \eta(x) \lambda^n\, d^n x = \sum_{k \in (a\mathbb{Z})^n} (a\lambda)^n \eta(k).
\end{equation}
The left side is $\lambda^n \hat{\eta}(0)$. Using  Poisson summation the right side is 
$$
(a\lambda)^n \sum_{k \in (a\mathbb{Z})^n} \eta(k) = \lambda^n \sum_{j \in (a^{-1} \mathbb{Z})^n} \hat{\eta}(j)= \lambda^n \hat{\eta}(0),
$$
where the last equality holds because  $\supp \hat{\eta} \subset [-\frac{1}{a}, \frac{1}{a}]^n$, since $B \le \frac{1}{a}$.
Since $\hat{\eta}$  is a Schwartz function it necessarily must vanish at all points on the boundary of its support, hence the only non-vanishing point $k$ in $(a^{-1}\mathbb{Z})^n$ is $k=0$.
\end{proof}

The other half of Theorem \ref{thm:fullrange_poisson} follows from results we have already proved:

\begin{proof}[Proof of Theorem \ref{thm:fullrange_poisson}, part (ii)]
This is a direct consequence of Theorem \ref{thm:upper_bound_nyquist}.
\end{proof}

As we have mentioned in the context of Theorem \ref{thm:nyquist-dichotomy-1}, the mimicry demonstrated above need not be unique outside the range $B > \tfrac{1}{2a}$.

\begin{prop}
\label{prop:therearetwo}
For any $\lambda > 0$ and any $a$ and $B$ satisfying $0 < B \leq \tfrac{1}{2a}$, there exist two distinct point processes supported on $a\mathbb{Z}$ which mimic the Poisson process of intensity $\lambda$, and have different correlation measures for all $n\geq 1$ . 
\end{prop}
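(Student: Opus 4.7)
The plan is to exhibit two explicit processes on $a\mathbb{Z}$ that each mimic the Poisson process of intensity $\lambda$ on the whole range $0 < B \le \frac{1}{2a}$. Let $w_A^\ast$ be the discrete Poisson process on $a\mathbb{Z}$ with intensity $\lambda$ (Definition \ref{def:discretePoisson}). Let $w_B^\ast$ be the discrete Poisson process on the sublattice $2a\mathbb{Z}$ with intensity $\lambda$, regarded as a point process on $a\mathbb{Z}$ by placing almost surely $0$ points at every site in $a\mathbb{Z}\setminus 2a\mathbb{Z}$. Both processes are u.l.m.\ since at each site the number of points is Poisson with bounded mean, independent across sites, so all moments of $\#_{[L,L+1]}(u)$ are finite and uniformly bounded in $L$.

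Next I would verify that both $w_A^\ast$ and $w_B^\ast$ mimic the Poisson process of intensity $\lambda$ at bandwidth $[-B,B]$ for every $B$ in the range $0 < B \le \frac{1}{2a}$. The process $w_A^\ast$ mimics the Poisson process of intensity $\lambda$ for every $B \le \frac{1}{a}$ directly by Theorem \ref{thm:fullrange_poisson}(i) applied to the lattice $a\mathbb{Z}$. For $w_B^\ast$, I would invoke the same theorem, but applied instead with lattice spacing $2a$: it guarantees mimicry of the Poisson process of intensity $\lambda$ by the discrete Poisson process on $2a\mathbb{Z}$ at every bandwidth $B \le \frac{1}{2a}$. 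Both ranges cover $0 < B \le \frac{1}{2a}$, as required.

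To establish that the two processes have different correlation measures for every $n\ge 1$, I would appeal to Proposition \ref{prop:discretePoisson_corr}, which (together with uniqueness of correlation measures) gives
\[
\rho^A_n(\{(k_1,\dots,k_n)\}) = (a\lambda)^n \qquad \text{for all } (k_1,\dots,k_n)\in(a\mathbb{Z})^n,
\]
and
\[
\rho^B_n(\{(k_1,\dots,k_n)\}) = \begin{cases} (2a\lambda)^n & \text{if every } k_i\in 2a\mathbb{Z},\\ 0 & \text{otherwise.}\end{cases}
\]
Since $a\in a\mathbb{Z}\setminus 2a\mathbb{Z}$, evaluating both measures at the single lattice point $(a,a,\dots,a)\in(a\mathbb{Z})^n$ yields $(a\lambda)^n>0$ for $\rho^A_n$ and $0$ for $\rho^B_n$. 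Hence the correlation measures differ at every level $n\ge 1$, which in particular implies the processes themselves are distinct.

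The construction is really just Theorem \ref{thm:fullrange_poisson}(i) applied twice, once with lattice $a\mathbb{Z}$ and once with the sublattice $2a\mathbb{Z}$; the only conceptual point is recognizing that a process supported on $2a\mathbb{Z}$ is automatically a process supported on $a\mathbb{Z}$, so two valid candidates coexist below the Nyquist bandwidth. I do not anticipate a significant obstacle beyond checking that the twice-Nyquist bound for the coarser lattice $2a\mathbb{Z}$ is exactly $\frac{1}{2a}$, matching the hypothesis of the proposition.
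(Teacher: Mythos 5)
Your proposal is correct and is essentially the paper's own proof: both use the discrete Poisson process on $a\mathbb{Z}$ and the discrete Poisson process on the sublattice $2a\mathbb{Z}$, invoke Theorem \ref{thm:fullrange_poisson}(i) for each lattice spacing, and observe that both are supported on $a\mathbb{Z}$. Your explicit evaluation of the two correlation measures at $(a,\dots,a)$ just spells out what the paper calls ``plain from the definition.''
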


\begin{proof}
Let $w^\ast$ be the discrete Poisson process on $a\mathbb{Z}$ with intensity $\lambda$ and let $w^{\ast\ast}$ be the discrete Poisson process on $2a\mathbb{Z}$ with intensity $\lambda$. For $B\leq \tfrac{1}{2a}$, we have that both $w^\ast$ and $w^{\ast\ast}$ mimic the Poisson process at bandwidth $[-B,B]$. For $w^\ast$, this is implied directly by Theorem \ref{thm:fullrange_poisson}. For $w^{\ast\ast}$, we also verify mimicry from Theorem \ref{thm:fullrange_poisson}, with the lattice spacing $a$ replaced by a lattice spacing of $2a$. Yet $2a\mathbb{Z} \subset a\mathbb{Z}$, so both $w^\ast$ and $w^{\ast\ast}$ are supported on the lattice $a\mathbb{Z}$, and it is plain from the definition that $w^\ast$ and $w^{\ast\ast}$ have different correlation measures for all  $n\geq 1$.
\end{proof}

\section{Mimicry of the sine process}
\label{sec:sinekernel}

\subsection{The discrete  sine process}
\label{subsec:discrete_sinekernel}
In this section we prove Theorem \ref{thm:fullrange_sinekernel}. A key tool will be the discrete sine process.

\begin{thm}
\label{thm:discrete_sinekernel_existence}
For each $0 < a \leq 1$, there exists a unique point process $z^\ast$ on $a\mathbb{Z}$ such that for all $n\geq 1$ and all $\phi \in \mathcal{S}(\mathbb{R}^n)$,
\begin{equation}
\label{eq:discrete_sinekernel_corrs}
\Ee \distinctsum \phi(z_{j_1}^\ast,...,z_{j_n}^\ast) = \sum_{k \in (a\mathbb{Z})^n} a^n \det_{n\times n}[ S(k_i - k_j)] \phi(k).
\end{equation}
Moreover $z^\ast$ has uniform local moments.
\end{thm}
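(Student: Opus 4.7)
Plan: My approach is to realize $z^*$ as the determinantal point process (DPP) on the discrete space $a\mathbb{Z}$, taken with counting measure, whose correlation kernel is $K^*(k,l) := a\, S(k-l)$. With this choice, the $n$-point correlation function
$$
\rho_n^*(k_1,\ldots,k_n) \;=\; \det_{n\times n}[K^*(k_i,k_j)] \;=\; a^n \det_{n\times n}[S(k_i-k_j)]
$$
is exactly the integrand on the right-hand side of \eqref{eq:discrete_sinekernel_corrs}. To apply the Macchi--Soshnikov existence theorem, it will suffice to show that the associated operator $\mathcal{K}^*$ on $\ell^2(a\mathbb{Z})$ with matrix $[K^*(k,l)]_{k,l \in a\mathbb{Z}}$ is a self-adjoint, locally trace-class operator with spectrum contained in $[0,1]$; all of these will follow at once from verifying that $\mathcal{K}^*$ is an orthogonal projection.

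The main technical step is to diagonalize $\mathcal{K}^*$ via Fourier analysis. Using the representation $S(x) = \int_{-1/2}^{1/2} e(\xi x)\, d\xi$ together with Poisson summation on $a\mathbb{Z}$, I would compute that for $\xi$ in the fundamental domain $[-\tfrac{1}{2a}, \tfrac{1}{2a}]$,
$$
\sum_{m \in a\mathbb{Z}} a\, S(m) e(-m\xi) \;=\; \mathbf{1}_{[-1/2,1/2]}(\xi).
$$
The hypothesis $0 < a \leq 1$ enters precisely here: since $1/a \geq 1$, the only translate of $[-\tfrac12,\tfrac12]$ by an element of $(1/a)\mathbb{Z}$ meeting the fundamental domain $[-\tfrac{1}{2a},\tfrac{1}{2a}]$ is the interval itself. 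Passing to the discrete Fourier transform $\hat{f}(\xi) = \sum_{k \in a\mathbb{Z}} f(k) e(-k\xi)$, which up to normalization is an isometry from $\ell^2(a\mathbb{Z})$ to $L^2([-\tfrac{1}{2a}, \tfrac{1}{2a}])$, the above identity shows $\mathcal{K}^*$ is unitarily equivalent to multiplication by $\mathbf{1}_{[-1/2,1/2]}(\xi)$. Hence $\mathcal{K}^*$ is a self-adjoint projection with spectrum in $\{0,1\}$, and its restriction to any finite $V \subset a\mathbb{Z}$ is a finite matrix of trace $a\,\#(V)$, so $\mathcal{K}^*$ is locally trace class. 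The Macchi--Soshnikov theorem (see, e.g., \cite[Thm.~4.5.5]{HoKrPeVi09}) then produces a point process $z^*$ on $a\mathbb{Z}$ whose $n$-point correlation functions with respect to counting measure are exactly $\det[K^*(k_i,k_j)]$, which gives \eqref{eq:discrete_sinekernel_corrs}.

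For the remaining claims: because $\mathcal{K}^*$ is a projection, $z^*$ is simple on $a\mathbb{Z}$, so $\#_{\{k\}}(z^*) \in \{0,1\}$ almost surely for every $k \in a\mathbb{Z}$ --- this is also visible directly via \eqref{eq:factorial_counts} from the vanishing of the diagonal two-point function $\det\left(\begin{smallmatrix} a & a \\ a & a \end{smallmatrix}\right) = 0$. Consequently, for any $L \in \mathbb{R}$,
$$
\#_{[L,L+1]}(z^*) \;\leq\; |a\mathbb{Z}\cap [L,L+1]| \;\leq\; \lceil 1/a \rceil + 1 \quad \text{a.s.},
$$
uniformly in $L$, which delivers uniform local moments trivially. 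Uniqueness of $z^*$ as a point process with the specified correlation measures then follows from Lenard's uniqueness theorem (cf.\ Remark \ref{rem:12}): the almost sure uniform boundedness of $\#_{[L,L+1]}$ forces the local factorial moments to grow slowly enough that the correlation measures determine the law. The principal obstacle I anticipate is cleanly executing the Fourier/Poisson-summation identification of $\mathcal{K}^*$ with a multiplication operator while keeping the normalizations straight and justifying the interchange of sum and integral; everything else is routine once that identification is in hand.
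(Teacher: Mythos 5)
Your proposal is correct and follows essentially the same route the paper indicates (the proof is deferred to the appendix of the companion paper \cite{LaRo19}, where existence is likewise established via the theory of determinantal point processes, realizing the kernel $aS(k-l)$ as an orthogonal projection on $\ell^2(a\mathbb{Z})$ through its Fourier symbol $\mathbf{1}_{[-1/2,1/2]}$ and invoking the Macchi--Soshnikov theorem, with $0<a\le 1$ entering exactly as you say). The only point worth making explicit is the final passage from $C_c(\mathbb{R}^n)$ test functions (which is what the Macchi--Soshnikov theorem delivers) to Schwartz-class test functions, which follows from the a.s.\ bound on $\#_{[L,L+1]}(z^\ast)$ together with Proposition \ref{prop:toSchwartz}.
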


\begin{dfn}
\label{dfn:discrete_sine_kernel_def}
The point process $z^\ast$ described by Theorem \ref{thm:discrete_sinekernel_existence} is called \textbf{the discrete sine process on $a\mathbb{Z}$.}
\end{dfn}

The discrete sine process is not new; in various guises it has appeared in \cite{BoOkOl00,Jo02,Wi72,Wi94} and a proof of its existence follows the same ideas as for the (continuous) sine process, coming from the theory of determinantal point processes. The details of this proof however do not seem to be in the literature. We provide a proof of Theorem \ref{thm:discrete_sinekernel_existence} in the appendix of a companion paper \cite{LaRo19}. For $a > 1$, there does not exist a point process with correlation structure described by \eqref{eq:discrete_sinekernel_corrs}, see \cite[Remark A.3]{LaRo19}.

The discrete sine process on $a \mathbb{Z}$  is  a simple point process for $0< a \le 1.$ (\cite[Proposition 4.4]{LaRo19}). This simplicity property exhibits  repulsion of points, a characteristic property of determinantal point processes, cf. \cite[Chap. 1]{HoKrPeVi09}.

\subsection{Mimicry for $B \le \frac{1-a}{a}$}
\label{subsec:sinekernel_existence}
We show that the sine process can be mimicked by the discrete sine process; this is the first part of Theorem \ref{thm:fullrange_sinekernel}. As in the previous section, our proof depends on Poisson summation.

\begin{proof}[Proof of Theorem \ref{thm:fullrange_sinekernel}, part (i)]
We show for $B \leq \tfrac{1-a}{a} = 1/a-1$, the sine process is mimicked by the discrete sine process on $a\mathbb{Z}$. By Theorem \ref{thm:discrete_sinekernel_existence} and \eqref{eq:sinekernel_def} this is just a matter of showing that for $\eta \in \mathcal{S}(\mathbb{R}^n)$ with $\supp \hat{\eta} \subset [-B,B]^n$,
\begin{equation}
\label{eq:sinekernel_verify}
\int_{\mathbb{R}^n} \eta(x) \det_{n\times n}[S(x_i-x_j)]\, d^n x = a^n \sum_{k \in (a\mathbb{Z})^n} \eta(k) \det_{n\times n}[S(k_i-k_j)].
\end{equation}
Let $g(x) = \eta(x) \det_{n\times n}[S(x_i-x_j)]$. Then \eqref{eq:sinekernel_verify} is just the claim that
$$
\int_{\mathbb{R}^n} g(x)\,d^n x = a^n \sum_{k \in (a\mathbb{Z})^n} g(k),
$$
and as the left hand side is $\hat{g}(0)$, this identity will be verified by Poisson summation if we show $\hat{g}(y) = 0$ whenever $y \notin (-1/a,1/a)^n$.

For notational reasons we let $E = [-1/2,1/2]$. One has the well-known computation
$$
S(x) = \int_\mathbb{R} \mathbf{1}_{E}(\xi) e(\xi)\, d\xi
$$
so, where $\mathfrak{S}_n$ is the symmetric group,
\begin{align*}
\det_{n\times n}[S(x_i-x_j)] &= \sum_{\sigma \in \mathfrak{S}_n} \sgn(\sigma) \prod_{i=1}^n S(x_i-x_j) \\
&= \sum_{\sigma \in \mathfrak{S}_n} \sgn(\sigma) \int_{E^n} e\Big(\sum_{i=1}^n \xi_i (x_i-x_{\sigma(i)})\Big)\, d^n\xi \\
&= \sum_{\sigma \in \mathfrak{S}_n} \sgn(\sigma) \int_{E^n} e\Big(\sum_{i=1}^n x_i (\xi_i - \xi_{\sigma^{-1}(i)})\Big)\, d^n\xi.
\end{align*}
Hence for $y \in \mathbb{R}^n$,
\begin{align}
\label{eq:ghat_eval}
\notag \hat{g}(y) &= \sum_{\sigma \in \mathfrak{S}_n} \sgn(\sigma) \int_{E^n} \int_{\mathbb{R}^n} e(-x\cdot y) e\Big(\sum_{i=1}^n x_i (\xi_i - \xi_{\sigma^{-1}(i)})\Big) \eta(x)\, d^nx\, d^n\xi \\
&= \sum_{\sigma \in \mathfrak{S}_n} \sgn(\sigma) \int_{E^n} \hat{\eta}(y_1 - (\xi_1 - \xi_{\sigma^{-1}(1)}), ..., y_n - (\xi_n - \xi_{\sigma^{-1}(n)}))\, d^n \xi.
\end{align}
But for $y \notin (-1/a,1/a)^n$, we must have $|y_i| \geq 1/a$ for some $i$, and hence for $\xi \in E^n$, we have $|y_i - (\xi_i - \xi_{\sigma^{-1}(i)})| \geq 1/a -1$. If $\hat{\eta}$ is supported in $[-B,B]^n$ with $B \leq 1/a-1$, we therefore have that the integrand in \eqref{eq:ghat_eval} vanishes for all $y \notin (-1/a,1/a)^n$, and $\hat{g}(y) = 0$ as we wanted. This therefore verifies \eqref{eq:sinekernel_verify} and proves the claim.
\end{proof}

\subsection{No mimicry for $a \leq \frac{1}{2}$ and $B > \frac{1-a}{a}$}
\label{subsec:sinekernel_nonexistence1}
We now prove part (ii) of Theorem \ref{thm:fullrange_sinekernel}. 
For $a \leq 1/2$, our strategy will be to suppose the sine  process can be mimicked for bandwidth $B > \frac{1-a}{a}$ and obtain a contradiction. Our main tool, as before, is Lemma \ref{thm:nyquist_mimickry}, but now we use $2$-level correlations.

\begin{proof}[Proof of Theorem \ref{thm:fullrange_sinekernel}, part (ii)]
Let $a\leq 1/2$ and let $z$ be the sine process. Suppose there exists a u.l.m. point process $z'$ supported on $a\mathbb{Z}$ which mimics $z$ at bandwidth $B > \tfrac{1-a}{a} = 1/a -1$; 
we will obtain a contradiction. 

For $a \leq 1/2$, this implies $B > 1/2a$ and so Theorem \ref{thm:nyquist_mimickry} applies. Thus for any $k\in (a\mathbb{Z})^2$ and all sufficiently small $\varepsilon > 0$,
\begin{align*}
\rho'_2(k) &= \int_{\mathbb{R}^2} \hat{\beta}_\varepsilon\Big(\frac{x_1 - k_1}{a}\Big) \hat{\beta}_\varepsilon\Big(\frac{x_2-k_2}{a}\Big)(1 - S(x_1-x_2)^2)\, dx_1 dx_2 \\
&= \int_{\mathbb{R}^2} \beta_\varepsilon(\xi_1) \beta_\varepsilon(\xi_2) e\Big(-\frac{k_1\xi_1+k_2\xi_2}{a}\Big) \Big[ \delta\Big(\frac{\xi_1}{a})\delta\Big(\frac{\xi_2}{a}\Big) - \delta\Big(\frac{\xi_1+\xi_2}{a}\Big)\Big(1 - \Big| \frac{\xi_1}{a}\Big|\Big)_+ \Big]\, d\xi_1 d\xi_2 \\
&= a^2 \Big( 1 - \int_\mathbb{R} \beta_\varepsilon(a\nu)^2 e((k_1-k_2)\nu) (1 - |\nu|)_+\, d\nu\Big),
\end{align*}
where the computation in the second line uses the Fourier pair $f(x) = S(x)^2$, $\hat{f}(\xi) = (1-|\xi|)_+$, and the computation in the third line makes use of the fact that $\beta_\varepsilon$ is even to simplify the resulting expression. As this is true for all sufficiently small $\varepsilon$, we can take the limit as $\varepsilon \rightarrow 0$, and see that
$$
\rho'_2(k) = a^2\Big(1 - \int_{-1/2a}^{1/2a} e((k_1-k_2)\nu) (1-|\nu|)_+\, d\nu\Big) = a^2 (1 - S(k_1-k_2)^2),
$$
with the last identity following because $(1-|\nu|)_+$ is supported in $[-1/2a,1/2a]$ for $a \leq 1/2$.

Hence for any $\eta \in \mathcal{S}(\mathbb{R})$, we must have for the point process $z'$,
\begin{equation}
\label{eq:sine_mimic_implication1}
\mathbb{E} \sum_{\substack{j_1, j_2 \\ \textrm{distinct}}} \eta(z_{j_1}', z_{j_2}') = a^2 \sum_{k \in (a\mathbb{Z})^2} \eta(k) (1 - S(k_1-k_2)^2).
\end{equation}

Yet if $z'$ mimics the sine process process at bandwidth $B$ for $\supp \hat{\eta} \subset [-B,B]^2$,
\begin{equation}
\label{eq:sine_mimic_implication2}
\mathbb{E} \sum_{\substack{j_1, j_2 \\ \textrm{distinct}}} \eta(z_{j_1}', z_{j_2}') = \int_{\mathbb{R}^2} \eta(x) (1 - S(x_1-x_2)^2)\, dx_1 dx_2.
\end{equation}
Let $g(x) = \eta(x)(1 - S(x_1-x_2)^2)$, so that as a consequence of \eqref{eq:ghat_eval} for $n=2$,
\begin{equation}
\label{eq:2point_ghat_eval}
\hat{g}(y_1,y_2) = \hat{\eta}(y_1,y_2) - \int_{\mathbb{R}} \hat{\eta}(y_1 - \xi, y_2 + \xi) (1-|\xi|)_+\, d\xi.
\end{equation}
By Poisson summation the expression on the right hand side of \eqref{eq:sine_mimic_implication1} is
$$
\sum_{j \in (a^{-1}\mathbb{Z})^2} \hat{g}(j),
$$
while the expression in \eqref{eq:sine_mimic_implication2} is
$$
\hat{g}(0).
$$
These expressions are not equal if $\eta$ is chosen such that $\hat{\eta}(\xi)\geq 0$ for all $\xi$ and $\hat{\eta}$ is supported in a sufficiently small neighborhood of 
the point $(1/a-1, -(1/a-1))$ with $\hat{\eta}(1/a-1, -(1/a-1)) \neq 0$, since in this case
\begin{equation}
\label{eq:sine_mimic_implication3}
\sum_{j \in (a^{-1}\mathbb{Z})^2} \hat{g}(j) = \hat{g}(0) - \int_{\mathbb{R}} \hat{\eta}(1/a - \xi, - 1/a +\xi)(1-|\xi|)_+ \, d\xi
\end{equation}
due to \eqref{eq:2point_ghat_eval} and the facts that $\hat{\eta}(j) = 0$ for any $j \in (a^{-1}\mathbb{Z})^2$ and $\hat{\eta}(j_1 - \xi, j_2 +\xi) = 0$ for all $\xi \in (-1,1)$ if 
$j \in (a^{-1}\mathbb{Z})^2$ unless $j = (1/a, -1/a)$ (or possibly $j = 0$ if $a = 1/2$). But then \eqref{eq:sine_mimic_implication3} is not equal to $\hat{g}(0)$ since $\hat{\eta}(1/a-1, -(1/a-1)) \neq 0$. 

This shows that \eqref{eq:sine_mimic_implication1} cannot equal \eqref{eq:sine_mimic_implication2}, a contradiction.
\end{proof}

\subsection{No mimicry for $a > \frac{1}{2}$ and $B \geq\frac{1}{2a}$}
\label{subsec:sinekernel_nonexistence2}
Finally we prove part (iii) of Theorem \ref{thm:fullrange_sinekernel}. This proof is rather more involved than the other proofs in this paper, and we break it into three steps: 

\begin{enumerate}
\item
in \textbf{step 1}, we show that band-limited mimicry can be extended to a slightly more general class of test-functions $\eta$ than Schwartz-class;
\item 
in \textbf{step 2} we develop some computations for the sine-determinant involving  a particular set of functions $h_{a, \ell}(x)$  allowed by step 1, which vanish on $a\mathbb{Z}$ 
except at $x=0$ or $x=\ell$, where $\ell$ is an odd multiple of $a$.
\item
in \textbf{step 3} we suppose the sine process can be mimicked for the relevant $a$ and $B$ and obtain a contradiction through a violation of suitable moment inequalities, 
as the parameter $\ell \to \infty$.
\end{enumerate} 

\textit{\textbf{Step 1:}} We extend the class of test functions outside the Schwartz class, to which band-limited mimicry can be applied:

\begin{lem}
\label{lem:bandlimited_bootstrapping}
If $u$ and $v$ are  u.l.m. point processes and $u$ mimics $v$ at bandwidth $[-B, B]$, then for all $n\geq 1$ if $\eta \in C(\mathbb{R}^n)$ is a function that can be written as 
$$\eta(x_1,...,x_n) = h(x_1)\cdots h(x_n)$$
with
\begin{enumerate}
\item $\hat{h}(\xi) = \int_{-\infty}^\xi \sigma(t)\,dt$ where $\sigma$ is of bounded variation, and
\item $\sigma$ and $\hat{h}$ are supported in $[-B,B]$
\end{enumerate} 
then we have
$$
\mathbb{E} \distinctsum \eta(u_{j_1},...,u_{j_n}) = \mathbb{E} \distinctsum \eta(v_{j_1},...,v_{j_n}).
$$
\end{lem}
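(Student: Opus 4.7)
The plan is a density argument: approximate $h$ by Schwartz functions $h_\varepsilon$ whose Fourier transforms remain supported in $[-B,B]$, apply the bandwidth-$[-B,B]$ mimicry hypothesis to the Schwartz product $\eta_\varepsilon(x_1,\ldots,x_n) := h_\varepsilon(x_1)\cdots h_\varepsilon(x_n)$ (whose Fourier transform is then supported in $[-B,B]^n$), and pass to the limit $\varepsilon \to 0$ on both sides of the resulting identity.

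To construct $h_\varepsilon$, first note that the condition $\supp \hat{h} \subset [-B, B]$ combined with $\hat h(\xi) = \int_{-\infty}^\xi \sigma(t)\,dt$ forces $\supp \sigma \subset [-B,B]$ together with $\int_{-B}^B \sigma(t)\,dt = 0$. I would build $\sigma_\varepsilon \in C_c^\infty((-B,B))$ satisfying $\sigma_\varepsilon \to \sigma$ in $L^1$, with $\|\sigma_\varepsilon\|_1$ bounded uniformly in $\varepsilon$ and $\int \sigma_\varepsilon = 0$, via a standard truncate-and-mollify procedure followed by subtracting a small correction bump to restore the zero-integral condition. Setting $\hat h_\varepsilon(\xi) := \int_{-\infty}^\xi \sigma_\varepsilon(t)\,dt$ places $\hat h_\varepsilon$ in $C_c^\infty([-B,B])$, so $h_\varepsilon \in \mathcal{S}(\mathbb{R})$. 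Integrating by parts, using that $\hat h_\varepsilon(\pm B) = 0$, yields
\[
h_\varepsilon(x) = -\frac{1}{2\pi i x}\int_{-B}^B \sigma_\varepsilon(\xi)\,e(x\xi)\,d\xi, \qquad x \ne 0,
\]
and the analogous identity holds for $h$ with $\sigma$ in place of $\sigma_\varepsilon$. Together these provide both pointwise convergence $h_\varepsilon(x) \to h(x)$ and a uniform decay bound $|h_\varepsilon(x)| \le C/(1+|x|)$, with $C$ depending only on $\|\sigma\|_1$ and $\|\hat h\|_\infty$.

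Applying the mimicry hypothesis to $\eta_\varepsilon$ yields
\[
\Ee \distinctsum \eta_\varepsilon(u_{j_1},\ldots,u_{j_n}) = \Ee \distinctsum \eta_\varepsilon(v_{j_1},\ldots,v_{j_n})
\]
for each $\varepsilon > 0$, and the remaining task is to pass to the $\varepsilon \to 0$ limit on both sides. I expect the main obstacle to be precisely this limiting step: the only pointwise bound available, $|\eta_\varepsilon(x)| \le C^n \prod_i (1+|x_i|)^{-1}$, is too weak to dominate the distinct sum against the $n$-point correlation measure of a general u.l.m. process, since the cross-box estimate $\rho_n\bigl(\prod_i [L_i, L_i+1]\bigr) \le D_n$ coming from the uniform local moments makes $\sum_{L \in \mathbb{Z}^n} \prod_i (1+|L_i|)^{-1}$ barely divergent. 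I would get around this by using Proposition \ref{prop:toSchwartz} to rewrite each expectation as $\int \eta_\varepsilon\, d\rho_n$ and then integrating by parts $n$ times in the Fourier-side representation, reducing the question to a pairing of the bounded-variation densities $\sigma_\varepsilon^{\otimes n}$ (supported in $[-B,B]^n$) against a Fourier-transformed correlation kernel. The mimicry hypothesis forces these two kernels (for $u$ and for $v$) to agree on $[-B,B]^n$, and the uniform $L^1$ and BV control on $\sigma_\varepsilon$ combined with $\sigma_\varepsilon \to \sigma$ in $L^1$ then permit passage to the limit, giving the equality for $\eta$ itself.
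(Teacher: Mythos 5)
Your overall strategy (approximate $h$ by band-limited Schwartz functions, apply mimicry to each approximant, pass to the limit) is the same as the paper's, and you correctly diagnose the obstacle: a single integration by parts only yields $|h_\varepsilon(x)| \lesssim 1/(1+|x|)$, and the product bound $\prod_i (1+|x_i|)^{-1}$ is not summable against the correlation measure of a general u.l.m.\ process. But your proposed way around this obstacle is not a proof. The object you call a ``Fourier-transformed correlation kernel'' is only a tempered distribution ($\rho_n$ has merely polynomial mass growth), and pairing a distribution of unspecified order against $\sigma_\varepsilon^{\otimes n}$ is not continuous with respect to $L^1$ convergence of $\sigma_\varepsilon$; moreover, before you can even write $\int \eta\, d\rho_n$ as such a pairing for the limit function $\eta$, you must already know that $\distinctsum \eta(u_{j_1},\ldots,u_{j_n})$ is integrable, which is exactly the decay question you are trying to sidestep. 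The claim that ``the mimicry hypothesis forces these two kernels to agree on $[-B,B]^n$'' is essentially a distributional restatement of the conclusion, not a step toward it.

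The missing idea is simply to integrate by parts a \emph{second} time, against $d\sigma$: since $\sigma$ is of bounded variation and $\hat h(\pm B)=0$, one gets
\begin{equation*}
|h(x)| \;=\; \Bigl| \int \frac{e(x\xi)}{(2\pi i x)^2}\, d\sigma(\xi) \Bigr| \;\leq\; \frac{\var(\sigma)}{4\pi^2 x^2},
\end{equation*}
hence $h(x) = O\bigl(\min(\|\hat h\|_{L^1}, \var(\sigma)/x^2)\bigr)$, and the same bound holds uniformly for the approximants provided your mollification does not increase the variation (this is the content of the paper's Lemmas \ref{lem:bv_fourier_decay} and \ref{lem:sigma_approx}; a Jordan decomposition $\sigma = \sigma_+ - \sigma_-$ with each piece smoothed monotonically gives $\var(\tilde\sigma)\le\var(\sigma)$). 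With the uniform majorant $|\eta_\varepsilon(x)| \le C^n \prod_i (1+x_i^2)^{-1}$ in hand, Theorem \ref{thm:correlations_to_rapidlydecaying} shows this majorant is integrable against $\rho_n$ for any u.l.m.\ process, and dominated convergence lets you pass to the limit on both sides exactly as you intended. So the fix is local: replace your one integration by parts (which uses only $\sigma \in L^1$) by two (which uses the bounded-variation hypothesis), and the rest of your outline goes through without any Fourier-side distributional argument.
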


The proof of Lemma \ref{lem:bandlimited_bootstrapping} is given in Appendix  \ref{subsec:bandlimited_bootstrapping}. The proof is a refinement of the proof of Theorem \ref{thm:correlations_to_rapidlydecaying} in Appendix \ref{sec:correlation_results}.

The point of Lemma \ref{lem:bandlimited_bootstrapping} is that $\eta$ is just slightly out of the Schwartz class, but expectations of these statistics can still be taken.

\vspace{10pt}

\textit{\textbf{Step 2:}}
We fix $a > 0$ and let $\ell=(2k+1)a$ be an odd multiple of $a$. Define the functions
$$
h_{a,\ell}(x) = S\Big(\frac{x}{a}\Big) + S\Big(\frac{x-\ell}{a}\Big),
$$
As $\ell$ is an odd multiple of $a$, we have 
$$h_{a, \ell}(x) = \frac{\sin \frac{\pi x}{a}}{\frac{\pi x}{a}}- \frac{\sin \frac{\pi x}{a}}{\frac{\pi x}{a}-(2k+1)} = O(\frac{1}{1+x^2})
$$
as $|x| \to \infty$. These functions $h_{a, \ell}$ are included among the test functions $h$ allowed in Lemma \ref{lem:bandlimited_bootstrapping}. A key property of the function $h_{a, \ell}(x)$ is that it vanishes at all $x \in a\mathbb{Z}$ except $x=0$ and $x=\ell$, where it takes the value $1$. We set
$$
H_{a,\ell}(x_1,...,x_n) = h_{a,\ell}(x_1)\cdots h_{a,\ell}(x_n),
$$
and note that  $H_{a,\ell}(x_1,...,x_n) = O(\tfrac{1}{1+x_1^2}\cdots\tfrac{1}{1+x_n^2})$, with implicit constants depending on $a, \ell, n$. Furthermore we define
$$
\Phi_n(a) = \lim_{\ell\rightarrow\infty,\, \textrm{odd}} \int_{\mathbb{R}^n} H_{a,\ell}(x_1,...,x_n) \det_{n\times n}[S(x_i-x_j)]\, d^n x.
$$
(The limit is taken over odd multiples of $a$, as $\ell\rightarrow\infty$.) Because of the decay of $H_{a,\ell}$ the integrals inside the limit are well-defined, though it is not yet obvious that the limit  exists.

\begin{lem}
\label{lem:phi_computations}
The limit defining $\Phi_n(a)$ exists for all $n\geq 1$ and $a > 0$, and
\begin{align*}
\Phi_1(a) &= 2a \\
\Phi_2(a) &= 
\begin{cases} 
2a^2,  & \mathrm{if}\; a \in (0,1/2] \\
1/2 - 2a + 4a^2, & \mathrm{if}\; a \in (1/2,\infty),
\end{cases} \\
\Phi_3(a) &=
\begin{cases}
0 & \mathrm{if}\; a \in (0,1/2] \\
(2a-1)^3 & \mathrm{if}\; a \in (1/2,\infty)
\end{cases}\\
\Phi_4(a) &=
\begin{cases}
0 & \mathrm{if}\; a \in (0,1/2] \\
(a-1/2)^2(1-20a + 12a^2) & \mathrm{if}\; a \in (1/2,1] \\
17/4 - 22a + 48a^2 - 48a^3 + 16a^4 & \mathrm{if}\; a \in (1,\infty).
\end{cases}
\end{align*}
\end{lem}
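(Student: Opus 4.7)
The plan is to use Fourier analysis to reduce the integral defining $\Phi_n(a)$ to a signed sum of polytope volumes indexed by $\mathfrak{S}_n$, and then to compute those volumes case by case. Write $c := 1/(2a)$ and $E := [-1/2, 1/2]$ throughout.

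First, I would combine the Fourier representation of the sine determinant (as in \eqref{eq:ghat_eval}) with the identity
$$\hat h_{a,\ell}(\eta) = a\,(1 + e^{-2\pi i \ell \eta})\, \mathbf{1}_{[-c,c]}(\eta).$$
Interchanging the $\xi$- and $x$-integrals and performing the $x$-integration by Fourier inversion yields
$$\int_{\mathbb{R}^n} H_{a,\ell}(x) \det_{n\times n}[S(x_i - x_j)]\, d^n x = a^n \int_{E^n} \det_{i,j}\bigl[(1 + e^{-2\pi i \ell(\xi_j - \xi_i)})\,\mathbf{1}_{|\xi_j - \xi_i| \leq c}\bigr]\, d^n\xi.$$
Next, I would expand the inner determinant as a sum over permutations $\sigma \in \mathfrak{S}_n$ and each factor $(1 + e^{-2\pi i \ell (\xi_i - \xi_{\sigma^{-1}(i)})})$ by the binomial theorem over subsets $T \subseteq \{1,\ldots,n\}$. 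Each resulting term carries an exponential factor $e^{-2\pi i \ell L_{\sigma,T}(\xi)}$ with linear form $L_{\sigma,T}(\xi) = \sum_{i \in T}(\xi_i - \xi_{\sigma^{-1}(i)})$. A direct coefficient check shows $L_{\sigma,T} \equiv 0$ precisely when $T$ is $\sigma$-invariant. By Riemann--Lebesgue applied to the $L^1$ indicator of the polytope cut out by the remaining $\mathbf{1}$-factors, all other terms contribute $o(1)$ as $\ell \to \infty$. Since the number of $\sigma$-invariant subsets of $\{1,\ldots,n\}$ equals $2^{k(\sigma)}$, where $k(\sigma)$ is the number of cycles of $\sigma$, one arrives at the master formula
$$\Phi_n(a) = a^n \sum_{\sigma \in \mathfrak{S}_n} \sgn(\sigma)\cdot 2^{k(\sigma)} \cdot V(\sigma,a), \quad V(\sigma,a) := \mathrm{vol}\bigl\{\xi \in E^n : |\xi_j - \xi_{\sigma(j)}| \leq c,\ j = 1,\ldots,n\bigr\}.$$

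The volume $V(\sigma,a)$ depends on $\sigma$ only through its cycle type, factorizing over cycle lengths as $\prod_i V_{\lambda_i}^{\mathrm{cyc}}(a)$ where $V_m^{\mathrm{cyc}}$ is the single $m$-cycle volume on $E^m$. When $a \leq 1/2$ all constraints are vacuous, each $V_m^{\mathrm{cyc}} \equiv 1$, and the classical cycle-index identity $\sum_\sigma \sgn(\sigma)\cdot 2^{k(\sigma)} = 2(2-1)(2-2)\cdots(2-n+1)$ (the falling factorial of $2$) immediately gives $\Phi_n(a) = 2a,\,2a^2,\,0,\,0$ for $n=1,2,3,4$. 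For $a > 1/2$ direct computation yields $V_2^{\mathrm{cyc}}(a) = 2c - c^2$ and $V_3^{\mathrm{cyc}}(a) = 3c^2 - 2c^3$ (the latter by reduction to the "$\max - \min \le c$" order-statistic problem on $E^3$); combined with the enumeration of cycle types of $\mathfrak{S}_n$ these recover $\Phi_1$, $\Phi_2$, and $\Phi_3$ as stated.

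The main obstacle is $\Phi_4(a)$ for $a > 1/2$, which additionally requires the 4-cycle volume $V_4^{\mathrm{cyc}}(a)$, i.e., the volume on $E^4$ where the four \emph{cyclic} differences (rather than all pairwise differences) are each at most $c$. Unlike the 3-cycle case this is not a "width $\leq c$" region, and it must be computed by explicit case analysis that naturally splits at $c = 1/2$ (i.e., $a = 1$), producing the case split at $a = 1$ visible in the stated formula for $\Phi_4$. Combining $V_4^{\mathrm{cyc}}(a)$ with the contributions from the $(2,1,1)$, $(2,2)$ and $(3,1)$ cycle types in $\mathfrak{S}_4$ (all computed from $V_2^{\mathrm{cyc}}$ and $V_3^{\mathrm{cyc}}$) then completes the calculation.
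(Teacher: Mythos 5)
Your proposal is correct and follows essentially the same route as the paper: the same Fourier reduction to the master formula $\Phi_n(a) = a^n\sum_\sigma \sgn(\sigma)\,2^{k(\sigma)}V(\sigma,a)$ with $V$ factorizing into cyclic-difference volumes (the paper's $f_\nu(1/2a)$), the same Riemann--Lebesgue elimination of non-$\sigma$-invariant cross terms, and the paper even records your falling-factorial cycle-index shortcut as a remark. The only piece you leave unexecuted is the explicit $4$-cycle volume, which the paper computes as $f_4(r)$ with the case split at $r=1/2$ exactly where you predict it.
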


\begin{proof}
In the first place, note
\begin{equation}
\label{eq:h_fourier}
\hat{h}_{a,\ell}(\xi) = a\cdot (1 + e(-\ell \xi)) I_a(\xi),
\end{equation}
where for notational reasons we write $I_a(\xi) = \mathbf{1}_{[-1/2a,1/2a]}(\xi)$. Fix $n$ and $a$, and for $x \in \mathbb{R}^n$, let
$$
g_\ell(x) = H_{a,\ell}(x) \det_{n\times n}[S(x_i-x_j)].
$$
Using \eqref{eq:ghat_eval}, and recalling the notational convention $E = [-1/2,1/2]$, we see
\begin{multline}
\label{eq:fourier_determinant}
\int_{\mathbb{R}^n} H_{a,\ell}(x) \det_{n\times n}[S(x_i-x_j)]\, d^nx \\ 
= \hat{g}_\ell(0)
= \sum_{\sigma \in \mathfrak{S_n}} \sgn(\sigma) \int_{E^n} a^n \prod_{j=1}^n \big(1+e(\ell(\xi_j-\xi_{\sigma^{-1}(j)}))\big) I_a(\xi_j - \xi_{\sigma^{-1}(j)})\, d^n\xi.
\end{multline}
We will take the limit of this expression as $\ell\rightarrow\infty$. By multiplying cross terms of \eqref{eq:fourier_determinant}, using the Riemann-Lebesgue Lemma to eliminate any terms in which an exponential remains, we see the limit as $\ell \rightarrow \infty$ exists and
\begin{equation}
\label{eq:phi_first_eval}
\Phi_n(a) = \sum_{\sigma \in \mathfrak{S_n}} \sgn(\sigma) N(\sigma) a^n \int_{E^n} \prod_{j=1}^n I_a(\xi_{j} - \xi_{\sigma^{-1}(j)})\, d^n\xi,
\end{equation}
where
\begin{align*}
N(\sigma) &= \#\{ T \subseteq \{1,...,n\}:\; \sigma(T) = T\} \\
&= 2^{\omega(\sigma)},
\end{align*}
with $\omega(\sigma)$ the number of cycles in the permutation $\sigma$. To deduce the remainder of the Lemma one evaluates the integrals on the right side of \eqref{eq:phi_first_eval} noting that the integral in \eqref{eq:phi_first_eval} breaks into separate parts for each cycle of $\sigma$.

To evaluate the integrals, for $\nu \geq 2$ we define
$$
f_\nu(r) = \int_{E^n} \mathbf{1}_{[-r,r]}(\xi_1-\xi_2) \cdots \mathbf{1}_{[-r,r]}(\xi_{n-1}-\xi_n)\mathbf{1}_{[-r,r]}(\xi_n-\xi_1)\, d^n \xi.
$$
One can verify
\begin{align*}
f_2(r) &= 
\begin{cases} 
2r - r^2,  & \mathrm{if}\; r \in (0,1) \\
1, & \mathrm{if}\; r \in [1,\infty),
\end{cases} \\
f_3(r) &=
\begin{cases}
3r^2-2r^3 & \mathrm{if}\; r \in (0,1) \\
1 & \mathrm{if}\; r \in [1,\infty)
\end{cases}\\
f_4(r) &=
\begin{cases}
(16r^3-14r^4)/3 & \mathrm{if}\; r \in (0,1/2) \\
(1-8r+24r^2-16r^3+2r^4)/3 & \mathrm{if}\; r \in [1/2,1) \\
1 & \mathrm{if}\; r \in [1,\infty).
\end{cases}
\end{align*}
(A computer algebra system is helpful here.) Painstakingly inserting these into \eqref{eq:phi_first_eval} yields the computations of $\Phi_1,...,\Phi_4$ that have been claimed.
\end{proof}

\begin{remark}
Using  cycle index polynomials one can make the computation indicated in the last line of the above proof more efficient by noting that if $Z(\mathfrak{S_n};\, a_1,...,a_n)$ is the cycle index polynomial of $\mathfrak{S_n}$ in the variables $a_1,...,a_n$, the formula \eqref{eq:phi_first_eval} simplifies to
$$
\Phi_n(a) = (-1)^n n! a^n Z(\mathfrak{S_n}; -2 f_1(1/2a),...,-2f_n(1/2a)),
$$
where we adopt the convention $f_1(r) = 1$ for all $r$.
\end{remark}

\textit{\textbf{Step 3:}} We can now complete the last part of the proof of Theorem \ref{thm:fullrange_sinekernel}.

\begin{proof}[Proof of Theorem \ref{thm:fullrange_sinekernel}, part (iii)]
Take $a > 1/2$. We now suppose that the sine  process $z$ can be mimicked at a bandwidth $B \geq 1/2a$ by a u.l.m. point process $z'$ supported on $a\mathbb{Z}$, and we will obtain a contradiction. For $\ell$ always an odd multiple of $a$, consider the random variable
\begin{align}
\label{eq:lucky_indicator}
X_\ell &= \sum_{j} h_{a,\ell}(z'_j) \\
&= \#_{\{0,\ell\}}(z'),
\end{align}
with the second identity dependent on the assumption that $z'$ is supported on $a\mathbb{Z}$.

We consider two sets of inequalities satisfied by expectation values of functions of this random variable. First,
$X_\ell$ is an \emph{nonnegative} \emph{integer-valued} random variable, and so clearly
\begin{equation}
\label{eq:X_ell_factorialmoment}
\mathbb{E}\, X_\ell (X_\ell-1) (X_\ell-2) (X_\ell-3) \geq 0.
\end{equation}

Secondly, let  
$$
m_\ell^r := \mathbb{E}\, X_\ell^r
$$
denote the {\em $r$-th moment} of $X_\ell$. By a consequence of the Hamburger moment criterion (see e.g. \cite[Theorem 1.2]{ShTa43}), we have,
\begin{equation}
\label{eq:Hamburger_det}
D_\ell = \det\begin{pmatrix} 
m_\ell^0 & m_\ell^1 & m_\ell^2 \\
m_\ell^1 & m_\ell^2 & m_\ell^3 \\
m_\ell^2 & m_\ell^3 & m_\ell^4
\end{pmatrix} \geq 0,
\end{equation}

We claim that for any choice of $a > \frac{1}{2}$ at least one of the inequalities \eqref{eq:X_ell_factorialmoment} or \eqref{eq:Hamburger_det} will not hold for all sufficiently large $\ell$.

For consider first $a \in (1/2,1]$. Note that from \eqref{eq:lucky_indicator} \and the indicator function identity \eqref{eq:factorial_counts} we have
\begin{equation}
\label{eq:counts_to_H}
\mathbb{E}\, X_\ell (X_\ell-1) (X_\ell -2) (X_\ell-3) = \mathbb{E} \distinctsum H_{a,\ell}(z'_{j_1}, z'_{j_2}, z'_{j_3}, z'_{j_4}).
\end{equation}
The computation \eqref{eq:h_fourier} reveals that $\hat{h}_{a,\ell}(\xi) = -2\pi i a\ell \int_{-\infty}^\xi  \, e(-\ell t) I_a(t)\, dt$, with the integrand of bounded variation and supported in $[-1/2a,1/2a] \subset [-B,B]$, so Lemma \ref{lem:bandlimited_bootstrapping} may be applied; if $z'$ mimics $z$, then \eqref{eq:counts_to_H} is equal to
$$
\int_{\mathbb{R}^4} H_{a,\ell}(x) \det_{4\times 4}[S(x_i-x_j)]\, d^4x.
$$
Taking the limit of this expression as $\ell \rightarrow \infty$ along odd multiples of $a$, Lemma \ref{lem:phi_computations} yields
\begin{align*}
\lim_{\ell \rightarrow\infty, \mathrm{odd}} \mathbb{E}\,X_\ell (X_\ell -1) (X_\ell-2) (X_\ell-3) &= \Phi_4(a) \\
&= (a-1/2)^2 (1-20a + 12a^2).
\end{align*}
For $a \in (1/2,1]$, it can be checked that this number is strictly negative, but this contradicts \eqref{eq:X_ell_factorialmoment}.

Now consider $a > 1$. As above we have
$$
\lim_{\ell\rightarrow\infty, \mathrm{odd}} \mathbb{E}\, X_\ell (X_\ell-1) \cdots (X_\ell - (n-1)) = \Phi_n(a),
$$
and from this, using Lemma \ref{lem:phi_computations}, one may extract
\begin{align*}
\lim_{\ell\rightarrow\infty, \mathrm{odd}} \mathbb{E}\, X_\ell & = 2a,\quad (\textrm{for}\; a > 0) \\
\lim_{\ell\rightarrow\infty, \mathrm{odd}} \mathbb{E}\, X_\ell^2 &= \frac{1}{2} + 4a^2,\quad  (\textrm{for}\; a > 1/2)\\
\lim_{\ell\rightarrow\infty, \mathrm{odd}} \mathbb{E}\, X_\ell^3 &= \frac{1}{2} + 2a + 8a^3, \quad (\textrm{for}\; a > 1/2)\\
\lim_{\ell\rightarrow\infty, \mathrm{odd}} \mathbb{E}\, X_\ell^4 &= \frac{7}{4} + 2a + 4a^2 + 16a^4, \quad (\textrm{for}\; a > 1),
\end{align*}
and further, using the notation in \eqref{eq:Hamburger_det}, one may compute
$$
\lim_{\ell\rightarrow\infty, \mathrm{odd}} D_\ell = \frac{1}{2} - a^2, \quad (\textrm{for}\; a > 1).
$$
(A computer algebra system is helpful here.) But this is strictly negative for any choice of $a \in (1,\infty)$, and this contradicts \eqref{eq:Hamburger_det}.

Thus we have obtained a contradiction for all $a > 1/2$, so in this range such a u.l.m. point process $z'$ does not exist.
\end{proof}

\section{Further questions}
\label{sec:conclusion}

This paper formulated  the band-limited mimicking problem for u.l.m. point processes on $\mathbb{R}$. We studied two such processes where band-limited mimicry is possible, the Poisson process and the sine process. These processes are  special in at least two ways:
\begin{enumerate}
\item
Both  processes  are  $\mathbb{R}$ translation-invariant, in probability law and in the correlation sense defined in Section \ref{subsec:mainresults}. 

\item
These processes have $n$-point correlation measures for each $n \ge 1$ that have absolutely continuous densities $d \rho_n(x_1, x_2, ..., x_n)= f_n(x_1, ..., x_n) dx_1 dx_2 ... dx_n$, with $f_n(x_1,x_2, \cdots, x_n)$ defined on $\mathbb{R}^n$, with the property that they holomorphically extend to entire functions $f_n(z_1, z_2, ..., z_n)$ on $\mathbb{C}^n$.
\end{enumerate}
 
We raise several  general questions.

First, we do not know to what extent the band-limited mimicry phenomenon discussed in this paper  exists for   general u.l.m. point processes. Are there  u.l.m. point processes $\mathcal{P}$  that do not permit  band-limited mimicry at any bandwidth $B>0$? If there are, how general is the class of such processes for which band-limited mimicry  exists for some $(a,B) $ with $B>0$?

Second, related to this question: which u.l.m. point processes $u$ have the property that if $u$ supports band-limited mimicry for some $B>0$ on a lattice $a\mathbb{Z}$ then it supports band-limited mimicry for some $B= B(a') >0$ on each lattice $a' \mathbb{Z}$ having $0 < a' \le a$? Does this class of processes $u$ include all $\mathbb{R}$-translation invariant u.l.m. point processes?

Third, what restrictions does band-limited mimicry entail for point processes not necessarily supported on a lattice? For instance, let $\mathcal{T}_1$ be the class of all u.l.m. point processes $u$ which mimic the sine process at a bandwidth $B=1$, and let
$$
\mu = \sup\; \{m:\; \textrm{there exists } u \in \mathcal{T}_1 \textrm{ such that almost surely } |u_i-u_j| \geq m \textrm{ for all } i\neq j\}.
$$
Theorem \ref{thm:fullrange_sinekernel} shows that $\mu \geq 1/2$. The method of proof in Carneiro et. al \cite{CaChLiMi17}, which makes use only of pair correlation, should be able to be straightforwardly modified to show that $\mu \leq .606894$. It may be that $\mu = 1/2$.

Likewise let
$$
\lambda := \inf\; \{\ell:\; \textrm{there exists } u \in \mathcal{T}_1 \textrm{ such that almost surely } |u_{j+1}-u_j| \leq \ell \textrm{ for all } j \in \mathbb{Z}\}.
$$
What is the value of $\lambda$? Is it finite? It may be that a reinterpretation of methods from number theory (see e.g. Soundararajan \cite{So96}) can yield further upper bounds for $\mu$ and lower bounds for $\lambda$. Questions about both $\mu$ and $\lambda$ are closely connected to classical questions about gaps between zeros of the Riemann zeta function.

Fourth, to what extent do classical theorems and conjectures about the sine process (or zeros of the zeta function or eigenvalues of a random matrix) remain true for a point process which merely mimics the sine-process at some bandwidth? For instance, central limit theorems for mesoscopic statistics will still hold for processes which only mimic the sine-process (see \cite[Sec. 7]{Ro14}), along with suitably interpreted central limit theorems for characteristic polynomials (using the method of \cite[Sec. 7]{DiEv01}). To take another example, to what extent do results and conjectures about extreme values of the zeta function or characteristic polynomials (see e.g. \cite{ArBeBo17,ArBeBoRaSo19,ChMaNa18,FyHiKe12,Na18,PaZe17}) rely only upon information preserved by band-limited mimicry?

We also raise some more specific questions.

First, Theorem \ref{thm:fullrange_sinekernel} of this paper did  not completely determine the parameter ranges of $a$ and $B$  permitting band-limited mimicry for the sine process. What happens for those $(a, B)$ in  the white region of Figure \ref{fig:GoNoGoPlots}? Can the sine process be mimicked there or not? 
 
Second, it is obviously of interest to investigate the extent to which the band-limited mimicry phenomenon extends to other point processes.  Two  one-parameter classes of point processes which may be of interest to study  are: 
 
\begin{enumerate}
\item 
Valk\'{o} and Vir\'{a}g \cite{ValVir09}  define the one-parameter  family of ${\rm Sine}_{\beta}$ processes,  where $\beta> 0$. All members of this one-parameter family  are  $\mathbb{R}$-translation invariant, and they have the Poisson process as a suitable scaling limit as $\beta \to 0$, see Allez and Dumaz \cite{AlDu14}. The sine-process corresponds to $\beta=2$, and the Gaussian orthogonal and symplectic ensembles corresponds to  $\beta=1$ and $4$ respectively.
 
\item 
Sodin \cite{Sod17} introduces the one-parameter family of $\mathfrak{Si}_{a}$-processes (for $a \in \mathbb{R}$) as a model of critical points of characteristic polynomials for random matrices. In particular, the $\mathfrak{Si}_{0}$-process is presented as a model for the limiting  distribution of (normalized) spacings of zeros of the derivative of the Riemann $\xi$-function $\xi(s) = s(s-1) \pi^{-\frac{s}{s}} \Gamma (\frac{s}{2}) \zeta(s)$ assuming RH and the multiple correlation conjecture,  cf. \cite[Corollary 2.3]{Sod17}. (The multiple correlation conjecture is equivalent to the GUE Hypothesis, in the form \cite[Conjecture 2.1]{LaRo19}.)
 \end{enumerate}

%

\vspace{5pt}

\noindent {\bf Acknowledgments.}
We thank the reviewers for helpful comments. Work of the  first author was partially supported by NSF grant DMS-1701576, a Chern Professorship at MSRI in Fall 2018 and by a  Simons Foundation Fellowship in 2019. MSRI is partially supported by an NSF grant. The second author was partially supported by NSF grant DMS-1701577 and by an NSERC grant.

\appendix
\section{ Some general results on correlation measures}
\label{sec:correlation_results}

In this appendix we collect and prove some results regarding the correlation functions of point processes which we have used in the paper.

\subsection{Existence of correlation measures}
\label{subsec:existence_corrmeasures}

The following result essentially is \cite[Prop 3.2]{Le75}. We include the simple proof here for completeness.

\begin{thm}
\label{thm:moments_implies_correlation}
If $u$ is a point process on $\mathbb{R}$ such that for any compact set $K$ the random variable $\#_K(u)$ has finite moments of all orders, then for all $n\geq 1$ there exists a unique Borel measure $\rho_n$ on $\mathbb{R}^n$ such that
\begin{equation}
\label{eq:correlations}
\mathbb{E} \distinctsum \phi(u_{j_1},...,u_{j_n}) = \int_{\mathbb{R}^n} \phi(x_1,...,x_n) d\rho_n(x_1,...,x_n),
\end{equation}
for all $\phi \in C_c(\mathbb{R}^n)$.
\end{thm}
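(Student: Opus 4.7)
The plan is to apply the Riesz--Markov--Kakutani representation theorem to the linear functional on $C_c(\mathbb{R}^n)$ defined by the left-hand side of \eqref{eq:correlations}.

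First, I would define, for fixed $n \geq 1$, the functional
$$
\Lambda_n(\phi) := \mathbb{E} \distinctsum \phi(u_{j_1},\ldots,u_{j_n}), \qquad \phi \in C_c(\mathbb{R}^n),
$$
and check that it is well-defined and finite. For this the key observation is: if $\supp \phi \subset [-L,L]^n$, then only indices with all $u_{j_i} \in [-L,L]$ contribute to the sum, and the number of such ordered distinct $n$-tuples is at most $\#_{[-L,L]}(u) \cdot (\#_{[-L,L]}(u) - 1) \cdots (\#_{[-L,L]}(u) - n + 1) \leq (\#_{[-L,L]}(u))^n$. Hence
$$
\Big|\distinctsum \phi(u_{j_1},\ldots,u_{j_n})\Big| \leq \|\phi\|_\infty \, (\#_{[-L,L]}(u))^n,
$$
and the right-hand side is integrable by the hypothesis that $\#_K(u)$ has finite moments of all orders for compact $K$. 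This shows both that the sum converges absolutely almost surely and that its expectation is finite, so $\Lambda_n(\phi)$ is well-defined.

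Next I would verify the three properties required for Riesz--Markov: linearity, positivity, and local boundedness. Linearity of $\Lambda_n$ follows immediately from linearity of sums and of expectation. Positivity is equally immediate: if $\phi \geq 0$, then $\distinctsum \phi(u_{j_1},\ldots,u_{j_n}) \geq 0$ pointwise on $\mathrm{Conf}(\mathbb{R})$, hence $\Lambda_n(\phi) \geq 0$. For local boundedness, the bound displayed above gives $|\Lambda_n(\phi)| \leq \|\phi\|_\infty \cdot C_{L,n}$ whenever $\supp \phi \subset [-L,L]^n$, where $C_{L,n} = \mathbb{E}\,(\#_{[-L,L]}(u))^n < \infty$; this makes $\Lambda_n$ a positive linear functional continuous with respect to the inductive-limit topology on $C_c(\mathbb{R}^n)$.

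Finally, applying the Riesz--Markov--Kakutani representation theorem on the locally compact Hausdorff space $\mathbb{R}^n$ produces a unique Radon (hence Borel-regular) measure $\rho_n$ on $\mathbb{R}^n$ such that $\Lambda_n(\phi) = \int_{\mathbb{R}^n} \phi \, d\rho_n$ for all $\phi \in C_c(\mathbb{R}^n)$, which is exactly the identity \eqref{eq:correlations}. Uniqueness of $\rho_n$ follows from the uniqueness in Riesz--Markov (a Borel measure finite on compacts is determined by its integrals against $C_c$).

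The only genuine step, rather than bookkeeping, is the finiteness bound using $(\#_{[-L,L]}(u))^n$; once that is in hand the result is a direct application of Riesz--Markov, so I do not anticipate any substantive obstacle.
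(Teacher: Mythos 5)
Your proposal is correct and follows essentially the same route as the paper: both define the positive linear functional $\Lambda_n(\phi) = \mathbb{E}\distinctsum \phi(u_{j_1},\ldots,u_{j_n})$ and invoke the Riesz representation theorem, with integrability guaranteed by the finite-moments hypothesis. You merely make explicit the bound $|\distinctsum \phi| \leq \|\phi\|_\infty (\#_{[-L,L]}(u))^n$ that the paper leaves implicit.
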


\begin{remark}
A point process with uniform local moments will satisfy the hypothesis of  Theorem \ref{thm:moments_implies_correlation}.
\end{remark}

\begin{proof}
The fact that $\#_K(u)$ has finite $n$-th moment for any compact $K$ implies that for $\phi \in C_c(\mathbb{R}^n)$, the random variables $\distinctsum \phi(u_{j_1},...,u_{j_n})$ are integrable, and thus the mapping $\Lambda$ defined by
$$
\Lambda \phi = \mathbb{E} \distinctsum \phi(u_{j_1},...,u_{j_n}),
$$
is a positive linear functional on $C_c(\mathbb{R}^n)$. The Riesz representation theorem \cite[Ch. 2, Theorem 2.14]{Ru87} thus implies the existence of the Borel measure $\rho_n$.
\end{proof}

\subsection{Bootstrapping test functions from $C_c(\mathbb{R}^n)$ to $\mathcal{S}(R^n)$.}
\label{subsec:compactlysupported_to_schwartz}

We show that for u.l.m. point processes the correlation measures make sense with respect to not only $C_c(\mathbb{R}^n)$ test functions, but also Schwartz class test functions. Actually we show a bit more.

\begin{thm}
\label{thm:correlations_to_rapidlydecaying}
If $u$ is a u.l.m.  point process on $\mathbb{R}$ and $\rho_n$ is the measure on $\mathbb{R}^n$ defined by \eqref{eq:correlations} for $\phi \in C_c(\mathbb{R}^n)$, then \eqref{eq:correlations} also holds for all $n\geq 1$ and all $\phi \in C(\mathbb{R}^n)$ such that
$$
\phi(x_1,...,x_n) = O\Big(\frac{1}{(1+x_1^2)\cdots (1+x_n^2)}\Big).
$$
\end{thm}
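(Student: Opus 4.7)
The plan is to approximate $\phi$ by a family of compactly supported test functions and then pass to the limit on both sides of \eqref{eq:correlations} by dominated convergence. The technical crux is an integrability bound for $\prod_i (1+x_i^2)^{-1}$ against $\rho_n$, which in turn rests on a uniform bound for $\rho_n$ on arbitrary (not just on-diagonal) axis-aligned unit rectangles.

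First I would fix $\phi$ satisfying the stated decay, $|\phi(x_1,\dots,x_n)| \le C\prod_i (1+x_i^2)^{-1}$, choose a smooth cutoff $\chi_R \in C_c(\mathbb{R})$ with $\chi_R \equiv 1$ on $[-R,R]$, $\chi_R \equiv 0$ outside $[-R-1,R+1]$, and $0 \le \chi_R \le 1$, and define $\phi_R(x) := \phi(x)\prod_{i=1}^n \chi_R(x_i) \in C_c(\mathbb{R}^n)$. Then $\phi_R \to \phi$ pointwise with $|\phi_R| \le C\prod_i (1+x_i^2)^{-1}$, and \eqref{eq:correlations} holds for each $\phi_R$.

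The key lemma I would prove is: there exists $A_n < \infty$ (depending only on $n$ and the constants $C_1,\dots,C_n$ in \eqref{eq:uniform_moments_def}) such that for any lattice point $(k_1,\dots,k_n) \in \mathbb{Z}^n$,
\[
\rho_n\bigl([k_1,k_1+1]\times\cdots\times[k_n,k_n+1]\bigr) \le A_n.
\]
This is an iterated Hölder argument: the left side equals $\mathbb{E}\,\distinctsum \prod_i \mathbf{1}_{[k_i,k_i+1]}(u_{j_i})$, which is majorized by $\mathbb{E}\prod_i \#_{[k_i,k_i+1]}(u)$, and Hölder's inequality in $L^n$ together with \eqref{eq:uniform_moments_def} bounds this by $\prod_i C_n^{1/n} = C_n$. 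Summing $A_n \prod_i (1+k_i^2)^{-1}$ over $(k_1,\dots,k_n) \in \mathbb{Z}^n$ then yields the integrability
\[
\int_{\mathbb{R}^n} \prod_{i=1}^n \frac{1}{1+x_i^2}\, d\rho_n(x) < \infty.
\]

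Given this integrability, the right-hand side of \eqref{eq:correlations} for $\phi_R$ converges to that for $\phi$ by the dominated convergence theorem on $(\mathbb{R}^n,\rho_n)$, with dominant $C\prod_i(1+x_i^2)^{-1}$. For the left-hand side, I would take as majorant the nonnegative random variable
\[
M(u) := C\distinctsum \prod_{i=1}^n \frac{1}{1+u_{j_i}^2},
\]
which, being a monotone limit of distinct sums of functions in $C_c(\mathbb{R}^n)$, has expectation equal to $\int C\prod_i(1+x_i^2)^{-1}\, d\rho_n < \infty$ by monotone convergence; in particular $M(u)$ is finite almost surely. Thus $\distinctsum \phi(u_{j_1},\dots,u_{j_n})$ converges absolutely a.s., and dominated convergence inside the sum gives $\distinctsum \phi_R(u_{j_1},\dots,u_{j_n}) \to \distinctsum \phi(u_{j_1},\dots,u_{j_n})$ a.s., while a second application of dominated convergence (against $M(u)$) lets us pass the limit through the expectation. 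Equating the two limits yields \eqref{eq:correlations} for $\phi$.

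The main obstacle is the off-diagonal rectangle bound, since the u.l.m. hypothesis as stated controls only the symmetric cubes $[L,L+1]^n$; everything else reduces to careful bookkeeping with the dominated convergence theorem. Once the Hölder upgrade is in hand, the rest of the argument is routine.
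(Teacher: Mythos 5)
Your proposal is correct and follows essentially the same route as the paper: the paper likewise cuts off $\phi$ by a bump function, dominates everything by $Q(x)=\prod_i(1+x_i^2)^{-1}$, proves $\mathbb{E}\distinctsum Q(u_{j_1},\dots,u_{j_n})<\infty$ and $\int Q\,d\rho_n<\infty$ via the same unit-box decomposition and H\"older upgrade of the u.l.m. bound, and then applies dominated convergence on both sides. The only cosmetic difference is that you package the H\"older step as an off-diagonal rectangle bound for $\rho_n$, whereas the paper applies it directly to the random sum.
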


\begin{remark} 
Hence in particular for a point process with uniform local moments, \eqref{eq:correlations} holds for all $\phi \in \mathcal{S}(\mathbb{R}^n)$, for all $n\geq 1$.
\end{remark}

\begin{proof}
Let
\begin{equation}
\label{eq:Q_def}
Q(x_1,...,x_n) = \frac{1}{1+x_1^2} \cdots \frac{1}{1+x_n^2}.
\end{equation}
We first establish for the point process $u$ that
\begin{equation}
\label{eq:quad_convergence}
\mathbb{E} \distinctsum Q(u_{j_1},...u_{j_n}) < +\infty.
\end{equation}
For, there exists a absolute constant $K$ such that
$$
Q(x_1,...,x_n) \leq K \sum_{L \in \mathbb{Z}^n} Q(L_1,...,L_n) \mathbf{1}_{[L_1,L_1+1]}(x)\cdots \mathbf{1}_{[L_n,L_n+1]}(x)
$$
for all $x\in \mathbb{R}$, so we have that
\begin{align*}
\mathbb{E} \distinctsum Q(u_{j_1},...,u_{j_n}) &\leq K\cdot \, \mathbb{E} \sum_{L \in \mathbb{Z}^n} Q(L_1, ..., L_n) \#_{[L_1,L_1+1]}(u)\cdots \#_{[L_n,L_n+1]}(u) \\
& \leq K \sum_{L \in \mathbb{Z}^n}  Q(L_1,...,L_n) \prod_{i=1}^n (\mathbb{E}\, \#_{[L_i,L_i+1]}(u)^n)^{1/n} \\
& \leq K C_n \sum_{L \in \mathbb{Z}^n} Q(L_1,...,L_n) < +\infty,
\end{align*}
using Fatou's lemma and H\"older's inequality in the second line.

For the same reasons, we have
\begin{equation}
\label{eq:quad_integrable}
\int_{\mathbb{R}^n} Q(x_1,..,x_n) \, d\rho_n(x_1,...,x_n) < +\infty.
\end{equation}

Note also that \eqref{eq:quad_convergence} implies that almost surely
\begin{equation}
\label{eq:quad_summable}
\distinctsum Q(u_{j_1},...,u_{j_n}) \quad \mathrm{converges}.
\end{equation}

Let $\beta \in C_c(\mathbb{R}^n)$ be a bump function takes the value $1$ in some neighborhood of $0\in \mathbb{R}^n$ and which satisfies $0\leq \beta(x) \leq 1$ for all $x\in \mathbb{R}^n$. For $R > 0$ define $\phi_R(x) = \phi(x) \beta(x/R)$, and note for all $x\in \mathbb{R}^n$,
$$
\lim_{R\rightarrow\infty}\phi_R(x) = \phi(x).
$$
Moreover $\phi_R \in C_c(\mathbb{R}^n)$ for all $R$, and by assumption there is a constant $C>0$ such that 
$$
|\phi_R(x_1,...,x_n)| \leq C \,Q(x_1,...,x_n)
$$
for all $x\in \mathbb{R}^n$.

Now from \eqref{eq:quad_summable}, it is easy to see that almost surely
$$
\lim_{R\rightarrow\infty} \distinctsum \phi_R(u_{j_1},...,u_{j_n}) = \distinctsum \phi(u_{j_1},...,u_{j_n}).
$$
Hence using \eqref{eq:quad_convergence},\eqref{eq:quad_integrable} and dominated convergence,
\begin{align*}
\mathbb{E} \distinctsum \phi(u_{j_1},...,u_{j_n}) &= \lim_{R\rightarrow\infty} \distinctsum \phi_R(u_{j_1},...,u_{j_n}) \\
&= \lim_{R\rightarrow\infty} \int_{\mathbb{R}^n} \phi_R(x_1,...,x_n) \, d\rho_n(x_1,...,x_n) \\
&= \int_{\mathbb{R}^n} \phi(x_1,...,x_n)\, d\rho_n(x_1,...,x_n),
\end{align*}
as claimed.
\end{proof}

\subsection{Bootstrapping band-limited test functions}
\label{subsec:bandlimited_bootstrapping}

We prove Lemma \ref{lem:bandlimited_bootstrapping}. The proof will involve similar ideas to that of Theorem \ref{thm:correlations_to_rapidlydecaying}. We require two lemmas from analysis first. 

Below we consider functions $\sigma$ which are of bounded variation. We use the notation $\var(\sigma)$ to denote the total variation of the function $\sigma$ on the real line.

\begin{lem}
\label{lem:bv_fourier_decay}
Suppose $s(\xi) = \int_{-\infty}^\xi \sigma(t)\, dt$ where $\sigma$ and $s$ are integrable and $\sigma$ is of bounded variation. Then
$$
\hat{s}(x) = O(\min(\|s\|_{L^1(\mathbb{R})}, \var(\sigma)/x^2)).
$$
\end{lem}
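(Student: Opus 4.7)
The bound splits into two inequalities, and the plan is to treat each separately and then combine them.

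The first bound $|\hat{s}(x)| \leq \|s\|_{L^1(\mathbb{R})}$ is immediate from the definition of the Fourier transform, so the work is entirely in establishing the decay $|\hat{s}(x)| = O(\var(\sigma)/x^2)$, which I would prove by two successive integrations by parts.

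Before integrating by parts I would verify two ``vanishing at infinity'' facts. First, since $\sigma$ is of bounded variation it has well-defined limits as $\xi \to \pm\infty$, and since it is also integrable those limits must be $0$. Second, since $s$ is continuous with $s' = \sigma$ integrable, $s$ has limits at $\pm\infty$, and since $s$ itself is integrable those limits must also be $0$. With these in hand, a first integration by parts gives
\begin{equation*}
\hat{s}(x) = \int_{\mathbb{R}} s(\xi)\, e(-x\xi)\, d\xi = \frac{1}{2\pi i x}\int_{\mathbb{R}} \sigma(\xi)\, e(-x\xi)\, d\xi,
\end{equation*}
valid for $x \neq 0$, with no boundary contribution.

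The second integration by parts uses that a function of bounded variation induces a signed Radon measure $d\sigma$ of total mass $\var(\sigma)$. Integrating by parts against this measure (again with vanishing boundary terms) yields
\begin{equation*}
\int_{\mathbb{R}} \sigma(\xi)\, e(-x\xi)\, d\xi = \frac{1}{2\pi i x} \int_{\mathbb{R}} e(-x\xi)\, d\sigma(\xi),
\end{equation*}
and the right-hand integral is bounded in absolute value by $\var(\sigma)$. Substituting back gives $|\hat{s}(x)| \leq \var(\sigma)/(4\pi^2 x^2)$, which is $O(\var(\sigma)/x^2)$.

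The only subtle point is rigorously justifying the second integration by parts when $\sigma$ is merely of bounded variation rather than absolutely continuous; I would handle this either by invoking the standard integration-by-parts formula for Lebesgue--Stieltjes integrals, or by decomposing $\sigma$ into its absolutely continuous, singular continuous, and pure jump parts and handling each separately. The boundary term analysis is the one place where care is needed, but the integrability hypotheses on $s$ and $\sigma$ are exactly what is required to make them vanish. Combining the two bounds then gives the stated minimum.
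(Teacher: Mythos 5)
Your proof is correct and follows essentially the same route as the paper's: the trivial $L^1$ bound combined with two integrations by parts to arrive at $\hat{s}(x) = \int e(-x\xi)\,d\sigma(\xi)/(-2\pi i x)^2$, whence $|\hat{s}(x)| \leq \var(\sigma)/(4\pi^2 x^2)$. The paper states this tersely; your verification that the boundary terms vanish is exactly the detail it leaves implicit.
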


\begin{proof}
This is a combination of two standard results. The bound $\hat{s}(x) \leq \|s\|_{L^1}$ is obvious, and the bound $\var(\sigma)/x^2$ comes from integrating by parts twice in computing the Fourier transform:
$$
|\hat{s}(x)| = \Big| \int_{-\infty}^\infty \frac{e^{-i2\pi x\xi}}{(-i2\pi x)^2} d\sigma(\xi)\Big| \leq \frac{1}{4\pi^2 x^2} \int_{-\infty}^\infty |d\sigma(\xi)|.
$$
Combining these bounds proves the lemma.
\end{proof}

\begin{lem}
\label{lem:sigma_approx}
If $\sigma(t)$ is supported on the interval $[A,B]$ and of bounded variation, then for any $\epsilon > 0$ there exists a Schwartz function $\tilde{\sigma}(t)$ supported on $[A,B]$ such that
$$
\var(\tilde{\sigma}) \leq \var(\sigma),
$$
and
$$
\|\tilde{\sigma}-\sigma \|_{L^1(\mathbb{R})} < \epsilon.
$$
\end{lem}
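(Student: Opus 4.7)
The plan is to produce $\tilde\sigma$ by a two-step procedure: first contract the support strictly inside $[A,B]$ (creating a buffer), then mollify inside that buffer. Both operations preserve the total variation bound and move the function only a little in $L^1$, so their composition does what we want.

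For the first step, I would fix small $\delta > 0$ and introduce the affine bijection
$$\phi_\delta \colon [A+\delta, B-\delta] \to [A,B], \qquad \phi_\delta(t) = A + \frac{B-A}{B-A-2\delta}(t-A-\delta),$$
and set $\sigma^\delta(t) = \sigma(\phi_\delta(t))$ for $t \in [A+\delta,B-\delta]$, extended by $0$ elsewhere. Since $\phi_\delta$ is a monotone affine map, one has $\var(\sigma^\delta) = \var(\sigma)$. As $\delta \to 0$, $\phi_\delta(t) \to t$ uniformly on any fixed compact subinterval of $(A,B)$, and a standard density argument (reduce to continuous $\sigma$, where the claim is routine) gives $\sigma^\delta \to \sigma$ in $L^1(\mathbb{R})$.

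For the second step, I would pick a nonnegative $C^\infty$ bump $\psi$ supported in $[-1,1]$ with $\int \psi = 1$, set $\psi_\eta(t) = \eta^{-1}\psi(t/\eta)$, and define $\tilde\sigma := \sigma^\delta * \psi_\eta$ for some $0 < \eta < \delta$. Then $\tilde\sigma$ is $C^\infty$ with $\supp \tilde\sigma \subset [A+\delta-\eta, B-\delta+\eta] \subset [A,B]$, so $\tilde\sigma$ is Schwartz (it is smooth and compactly supported). The key estimate is $\var(\tilde\sigma) \leq \var(\sigma^\delta) = \var(\sigma)$, which I would get by viewing the distributional derivative as $d\tilde\sigma = (d\sigma^\delta) * \psi_\eta$ and applying the bound $|\mu * \psi_\eta|(\mathbb{R}) \leq |\mu|(\mathbb{R}) \int \psi_\eta = |\mu|(\mathbb{R})$, valid for any finite signed Radon measure $\mu$ and any probability density $\psi_\eta$.

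For the $L^1$ bound I would use the triangle inequality
$$\|\tilde\sigma - \sigma\|_{L^1} \leq \|\sigma^\delta * \psi_\eta - \sigma^\delta\|_{L^1} + \|\sigma^\delta - \sigma\|_{L^1},$$
where the first term tends to $0$ as $\eta \to 0$ by the standard approximation-to-identity property of $\{\psi_\eta\}$, and the second tends to $0$ as $\delta \to 0$ by Step 1. Choosing $\delta$ small enough and then $\eta$ small enough makes the sum less than $\epsilon$. The only mildly subtle point — and the step I would double-check carefully — is the inequality $\var(\sigma^\delta * \psi_\eta) \leq \var(\sigma^\delta)$; everything else is routine. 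No deep obstacle arises, but the buffer-then-mollify order is essential since straight mollification of $\sigma$ would enlarge the support beyond $[A,B]$.
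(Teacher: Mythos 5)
Your proof is correct, but it takes a genuinely different route from the paper's. The paper first applies the Jordan decomposition $\sigma = \sigma_+ - \sigma_-$ into monotone nondecreasing pieces that are constant outside $[A,B]$, then smooths each monotone piece separately while pinning its values outside $[A,B]$; the variation bound then falls out of the trivial identity $\var(\sigma_\pm) = \sigma_\pm(B) - \sigma_\pm(A)$ for monotone functions, with no measure theory needed. You instead contract the support and mollify, controlling the variation through the inequality $\abs{\mu * \psi_\eta}(\mathbb{R}) \leq \abs{\mu}(\mathbb{R})$ for the distributional derivative. Both arguments defer one technical point: the paper calls the construction of the smooth monotone approximants a ``straightforward exercise,'' while you defer the convolution--variation inequality. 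Your key inequality is genuinely fine, and if you prefer to avoid distributional derivatives altogether (and the attendant distinction between pointwise and essential variation), it follows directly from the definition: for any partition, $\sum_i \abs{(f*\psi_\eta)(t_{i+1}) - (f*\psi_\eta)(t_i)} \leq \int \psi_\eta(s) \sum_i \abs{f(t_{i+1}-s) - f(t_i-s)}\, ds \leq \var(f)$. One small point worth recording in your Step 1: the zero-extension of $\sigma \circ \phi_\delta$ does not create new variation at $A+\delta$ or $B-\delta$, because the jumps there have the same magnitudes $\abs{\sigma(A)}$, $\abs{\sigma(B)}$ as the jumps of $\sigma$ itself at $A$, $B$ (indeed $\sigma\circ\phi_\delta$ with $\phi_\delta$ extended affinely to all of $\mathbb{R}$ is already the zero-extension). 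A minor trade-off: your dilation rescales $\int \tilde{\sigma}$ by the factor $(B-A-2\delta)/(B-A)$ rather than preserving endpoint data exactly, which is harmless for the lemma as stated and also harmless in the application, where $\int\sigma = 0$.
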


\begin{proof}
As $\sigma$ is of bounded variation, the Jordan decomposition (see \cite[Sec. 5.2]{Ro88}) tells us there exists monotonic nondecreasing functions $\sigma_+$ and $\sigma_-$ such that $\sigma = \sigma_+-\sigma_-$ and $\sigma_+$ and $\sigma_-$ are constant for $t\notin [A,B]$, that is
$$
\sigma_\pm(t) = \sigma_\pm(A), \quad \textrm{for all}\; t \leq A,
$$
$$
\sigma_\pm(t) = \sigma_\pm(B), \quad \textrm{for all}\; t\geq B,
$$
and moreover $\var(\sigma) = \var(\sigma_+)+\var(\sigma_-)$. It is a straightforward exercise to construct monotonically nondecreasing functions $\tilde{\sigma}_+$ and $\tilde{\sigma}_-$ with Schwartz class derivatives such that for either $+$ or $-$,
$$
\|\tilde{\sigma}_\pm - \sigma_\pm\|_{L^1} < \epsilon /2,
$$
$$
\tilde{\sigma}_\pm(t) = \sigma_\pm(A), \quad \textrm{for all}\; t\leq A,
$$
$$
\tilde{\sigma}_\pm(t) = \sigma_\pm(B), \quad \textrm{for all}\; t \geq B.
$$
Note $\var(\tilde{\sigma}_+) = \var(\sigma_+) = \sigma_+(B) - \sigma_-(A)$ and $\var(\tilde{\sigma}_-) = \var(\sigma_-) = \sigma_-(B)-\sigma_-(A)$, so if $\tilde{\sigma} = \tilde{\sigma}_+ - \tilde{\sigma}_-$,
$$
\var(\tilde{\sigma}) \leq \var(\tilde{\sigma}_+) + \var(\tilde{\sigma}_-) = \var(\sigma_+) + \var(\sigma_-) = \var(\sigma),
$$
verifying the first inequality of the lemma. Because $\tilde{\sigma}$ is compactly supported and is the difference of two functions with Schwartz class derivatives, $\tilde{\sigma}$ is itself Schwartz class. Moreover from the triangle inequality, 
$$
\|\tilde{\sigma}-\sigma\|_{L^1} \leq \|\tilde{\sigma}_+-\sigma_+\|_{L^1} + \|\tilde{\sigma}_--\sigma_-\|_{L^1} < \epsilon,
$$
verifying the second claim of the lemma.
\end{proof}

We finally turn to Lemma \ref{lem:bandlimited_bootstrapping}.

\begin{proof}[Proof of Lemma \ref{lem:bandlimited_bootstrapping}]
The proof follows that of Theorem \ref{thm:correlations_to_rapidlydecaying}. We show for all $R \geq 1$ there exists $\eta_R \in \mathcal{S}(\mathbb{R}^n)$ such that
\begin{equation}
\label{eq:eta_property1}
\lim_{R\rightarrow\infty} \eta_R(x) = \eta(x), \quad \textrm{for all}\; x\in \mathbb{R}^n,
\end{equation}
\begin{equation}
\label{eq:eta_property2}
\supp \hat{\eta}_R \subset [-B,B]^n\, \quad \textrm{for all}\; R \geq 1,
\end{equation}
\begin{equation}
\label{eq:eta_property3}
\eta_R(x) = O(Q(x)), \quad \textrm{for all}\; x\in \mathbb{R}^n,\, R \geq 1,
\end{equation}
where $Q$ is the the quadratically decaying function defined in \eqref{eq:Q_def}. Then exactly by the argument in the proof of Theorem \ref{thm:correlations_to_rapidlydecaying}, we have
\begin{align*}
\mathbb{E} \distinctsum \eta(u_{j_1},...,u_{j_n}) &= \lim_{R\rightarrow\infty} \mathbb{E} \distinctsum\eta_R(u_{j_1},...,u_{j_n}) \\
&= \lim_{R\rightarrow\infty} \mathbb{E} \distinctsum \eta_R(v_{j_1},...,v_{j_n}) \\
&= \mathbb{E} \distinctsum \eta(v_{j_1},...,v_{j_n}).
\end{align*}
The functions $\eta_R$ are constructed in the following way. For $\sigma$ as in statement of Lemma \ref{lem:bandlimited_bootstrapping}, let $\tilde{\sigma}_R$ be a function described by Lemma \ref{lem:sigma_approx} such that $\supp \tilde{\sigma}_R \subset [-B,B]$, $\var(\tilde{\sigma}_R) \leq \var(\sigma)$ and $\|\tilde{\sigma}_R - \sigma\|_{L^1} \leq 1/R$. Define $h_R$ by
$$
\hat{h}_R(\xi) = \int_{-\infty}^\xi \tilde{\sigma}_R(t)\,dt,
$$
and note that
\begin{equation}
\label{eq:h_property2}
\supp \hat{h}_R \subset [-B,B],
\end{equation}
and for all $\xi$, $|\hat{h}_R(\xi) - \hat{h}(\xi)| \leq 1/R$ so that from the support of both functions $\hat{h}, \hat{h}_R$,
\begin{equation}
\label{eq:h_property1}
|h_R(x) - h(x)| \leq \| \hat{h}_R - \hat{h} \|_{L^1} \leq 2B/R.
\end{equation}
Finally from Lemma \ref{lem:bv_fourier_decay}, we have
\begin{align}
\label{eq:h_property3}
\notag h_R(x) &= O(\min(\|\hat{h}_R\|_{L^1}, \var(\tilde{\sigma}_R)/x^2)) \\
\notag &= O(\min(2B \|\tilde{\sigma}_R\|_{L^1}, \var(\sigma)/x^2)) \\
&= O\Big(\frac{1}{1+x^2}\Big).
\end{align}
Letting $\eta_R(x_1,...,x_n) = h_1(x_1)\cdots h_n(x_n)$, we see that \eqref{eq:eta_property1}, \eqref{eq:eta_property2}, \eqref{eq:eta_property3} are satisfied, using \eqref{eq:h_property1}, \eqref{eq:h_property2}, \eqref{eq:h_property3} respectively. This completes the proof.
\end{proof}

\end{document}